\theoremstyle{definition}
\newtheorem{thm}{Theorem}[section]
\newtheorem{lem}[thm]{Lemma}
\newtheorem{prop}[thm]{Proposition}
\newtheorem{conj}[thm]{Conjecture}
\theoremstyle{definition}
\newtheorem{rem}[thm]{Remark}
\begin{document}

\title[GPC for Kummer surfaces of self-product CM type]
{Grothendieck's period conjecture for Kummer surfaces of self-product CM type}

\author{Daiki Kawabe}
\address{
Mathematical Institute, Tohoku University,
Aoba, Sendai, 980-8578, Japan
}

\curraddr{
Department of Mathematics, School of Education,
Waseda University,
Shinjuku, Tokyo, 169-8050, Japan
}
\email{kawabe@aoni.waseda.jp, daiki.kawabe.math@gmail.com}

\date{May 18, 2026}
\keywords{Grothendieck period conjecture, Kummer surfaces, CM elliptic curves}
\thanks{2010 {\text Mathematics Subject Classification.} 14C15, 14C30, 14C25}
\thanks{The author is supported by the JSPS KAKENHI Grant Number 18H03667 and 
Encouragement Program for Young Scientists 2022, Tohoku University}

\maketitle

\begin{abstract}
We show that the Grothendieck period conjecture holds for the Kummer surface associated with the square of a CM elliptic curve.
This means that the period isomorphism is dense in the torsor of motivic periods.
In other words, the isomorphism is dense in the torsor of motivated periods, and motivated classes on powers of the surface are algebraic.
The point is that the motive has a non-trivial transcendental part, but belongs to the Tannakian category generated by the motive of a CM elliptic curve.
\end{abstract}

\section{Introduction}
Let $\mathcal{V}_{k}$ be the category of smooth projective varieties
over a subfield $k \subseteq \mathbb{C}$.
Y.~Andr\'e \cite{AndreIHES} constructed the Tannakian category $\mathcal{M}_{k}^{\mathrm{And}}$ of pure motives with $\mathbb{Q}$-coefficients which contains the category $\mathcal{M}_{k}^{\mathrm{hom}}$ of homological motives over $k$
\footnote{The Tannakian category of mixed motives with $\mathbb{Q}$-coefficients
is well-defined in two different ways (but canonically equivalent) due to Ayoub \cite{ay1} and Nori \cite{N}, respectively.
For the details on the TC, see \cite{DM}.}.
If the standard conjecture of Lefschetz type holds, the categories $\mathcal{M}_{k}^{\mathrm{And}}$, $\mathcal{M}_{k}^{\mathrm{hom}}$ are equivalent. 
We call $\mathcal{M}_{k}^{\mathrm{And}}$ the category of Andr\'e motives.
Its objects are smooth projective varieties over $k$
and its morphisms are given by \textit{motivated} correspondences \cite[§4]{AndreIHES}.
A functor $h: \mathcal{V}_{k} \rightarrow \mathcal{M}_{k}^{\mathrm{And}}$ plays the role of universal cohomology.\\
\indent The categories $\mathcal{M}_{k}^{*}$ $(* \in \{\mathrm{And}, \mathrm{hom} \})$ admit 
two $\otimes$-functors (de Rham and Betti realizations) $H_{DR}, H_{B} : \mathcal{M}_{k}^{*} \rightarrow \mathrm{Vect}(\mathbb{Q})$.
There is an isomorphism of functors 
\[ \omega :  H_{DR} \otimes \mathbb{C} \overset{\cong}\rightarrow H_{B} \otimes \mathbb{C}. \]
More precisely, for any $M \in \mathcal{M}_{k}^{*}$, there is an isomorphism in $\mathrm{Vect}(\mathbb{C})$
\[ \omega_{M} :  H_{DR}(M) \otimes \mathbb{C} \overset{\cong}\rightarrow H_{B}(M) \otimes \mathbb{C}. \]
The entries of a matrix of $\omega_{M}$ w.r.t. some basis of $H_{DR}(M)$ (resp. $H_{B}(M)$) are called the \textit{periods} of $M$,
denoted by $P(M)$.
Let $\langle M \rangle$ be the Tannakian subcategory of $\mathcal{M}_{k}^{\mathrm{And}}$ generated by an Andr\'e motive $M \in \mathcal{M}_{k}^{\mathrm{And}}$. 
Its objects are given by algebraic constructions on $M$, i.e. sums, subquotients, duals, and tensor products.
We can define the \textit{motivated torsor of periods} of $M \in \mathcal{M}_{k}^{\mathrm{And}}$
 to be the scheme
\[ \Omega^{\mathrm{And}}_{M} : = {\mathrm{Isom}}^{\otimes}(H_{DR \mid \langle M \rangle},  H_{B \mid \langle M \rangle})  \]
of isomorphisms between the restrictions to $\langle M \rangle$ of 
the $\otimes$-functors $H_{DR}$ and $H_{B}$.
In particular, $\Omega^{\mathrm{And}}_{M}$ has a canonical $\mathbb{C}$-rational point 
$\omega_{M} \in \Omega^{\mathrm{And}}_{M}$.
Let $Z_{M}$ be the Zariski closure of $\omega_{M}$ in $\Omega^{\mathrm{And}}_{M}$.
The main topic of this paper is:
\begin{conj}\label{Motivic GC} (Grothendieck \cite[p.102, footnote]{Grothendieck})
\begin{enumerate}
\item Let $M$ be an Andr\'e motive over $\overline{\mathbb{Q}}$.
We say that $M$ satisfies the \textit{motivated version of the Grothendieck period conjecture} (MGPC for short)  if the inclusion  
\[ Z_{M} \subseteq \Omega^{\mathrm{And}}_{M} \]
 is an equality.
\item Let $X$ be a smooth projective variety over $\overline{\mathbb{Q}}$
\footnote{The GPC was mentioned in \cite[p.102, footnote]{Grothendieck} only for curves of any genus and abelian varieties, was formulated in \cite[Ch.~IX]{AndreG} for smooth projective varieties,
and was generalized in \cite[23.4.1]{AndreBook} for Andr\'e motives over a subfield of $\mathbb{C}$ of finite transcendental degree over $\mathbb{Q}$.
It also was formulated for Nori mixed motives and is equivalent to the Kontsevich-Zagier period conjecture \cite{KZ} in some sense 
(\cite{H-MS}, \cite[Prop.~5.15]{H}).
A $p$-adic analog of the GPC was formulated in \cite{Andrep}, \cite{AndreTate} 
by taking the ``sufficiently general'' embedding.
The functional analog of the GPC was proved by Ayoub (\cite{ay3}, \cite{ay4}) and Nori.
The historical background was written in the letter in \cite{Be20}.}.
We say that $X$ (or $h(X)$) satisfies the \textit{Grothendieck period conjecture} (GPC for short)  
if the inclusion 
\[ Z_{X} \subseteq \Omega^{\mathrm{And}}_{X} \]
is an equality and motivated classes on powers of $X$ are algebraic.
\end{enumerate}
\end{conj}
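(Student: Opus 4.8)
The plan is to recast the conjectural equality $Z_{M} = \Omega^{\mathrm{And}}_{M}$ as a concrete transcendence statement and then, whenever possible, to reduce it to a transcendence result that is actually available. Recall first that $\Omega^{\mathrm{And}}_{M}$ is a bitorsor under the motivic Galois group $G_{B}(M) := \mathrm{Aut}^{\otimes}(H_{B \mid \langle M \rangle})$ (acting on the left) and its de Rham avatar $G_{DR}(M) := \mathrm{Aut}^{\otimes}(H_{DR \mid \langle M \rangle})$ (acting on the right), the two being forms of one another via $\omega_{M}$. By the Tannakian formalism (see \cite{AndreBook}), the Zariski closure $Z_{M}$ of $\omega_{M}$ is itself a torsor under an algebraic subgroup of $G_{DR}(M)$, and one has the key identity $\dim Z_{M} = \mathrm{tr.deg}_{\mathbb{Q}} \, \mathbb{Q}(P(M))$. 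Since the motivic Galois group of a motive of abelian type is connected, for such $M$ the inclusion $Z_{M} \subseteq \Omega^{\mathrm{And}}_{M}$ is an equality if and only if $\mathrm{tr.deg}_{\mathbb{Q}} \, \mathbb{Q}(P(M)) = \dim G_{B}(M)$, so MGPC becomes a clean numerical transcendence assertion.

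Next I would use the functoriality of the torsor construction to cut the problem down. If $N \in \langle M \rangle$, restriction of $\otimes$-functors along $\langle N \rangle \hookrightarrow \langle M \rangle$ gives a surjection $\Omega^{\mathrm{And}}_{M} \twoheadrightarrow \Omega^{\mathrm{And}}_{N}$ carrying $\omega_{M}$ to $\omega_{N}$, hence $Z_{M} \twoheadrightarrow Z_{N}$; consequently MGPC for $M$ forces MGPC for every object of $\langle M \rangle$. Thus, to prove GPC for a smooth projective $X / \overline{\mathbb{Q}}$, it suffices to (i) exhibit a motive $M_{0}$ — ideally of the form $h^{1}(A)$ for an abelian variety $A$ — with $\langle h(X) \rangle \subseteq \langle M_{0} \rangle$; (ii) prove MGPC for $M_{0}$; and (iii) check that motivated classes on powers of $X$ are algebraic, which in turn follows once $\langle h(X) \rangle$ sits inside a Tannakian subcategory on which motivated, Hodge and algebraic cycles coincide — for instance, the category generated by powers of a CM elliptic curve, whose Hodge ring is generated by divisor classes.

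The heart of the matter is step (ii). For $A$ of CM type the group $G_{B}(h^{1}(A))$ is an explicit algebraic torus attached to the CM type, so one must show that the transcendence degree of the field generated by the periods and quasi-periods of $A$ attains the dimension of that torus. For a single CM elliptic curve $E$ this is exactly Chudnovsky's theorem on the algebraic independence of $\pi$ and a nonzero period $\omega$, which yields transcendence degree $2$, matching the $2$-dimensional torus $T_{K}$; and since $h^{1}(E^{n}) \cong h^{1}(E)^{\oplus n}$, so that $\langle h^{1}(E^{n})\rangle = \langle h^{1}(E)\rangle$, the same input already covers all self-products of a fixed CM elliptic curve, hence every variety whose full motive lands in $\langle h^{1}(E) \rangle$.

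I expect the decisive obstacle to be precisely the absence of any general transcendence theorem at step (ii): for higher-dimensional CM abelian varieties the required algebraic independence of periods and quasi-periods is open, and outside the abelian-type world there is essentially no input, so the conjecture as stated — for arbitrary André motives or smooth projective varieties over $\overline{\mathbb{Q}}$ — is out of reach. The realistic scope of this strategy is therefore exactly the class of $X$ for which $\langle h(X) \rangle$ is generated by a motive whose periods are governed by a known transcendence result; identifying, for a given such $X$, this generating motive (here, the $h^{1}$ of a CM elliptic curve), controlling the transcendental summand of $h^{2}(X)$, and verifying the algebraicity of the relevant motivated cycles is then the remaining — and genuinely doable — work.
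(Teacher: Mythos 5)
You are being asked about Conjecture \ref{Motivic GC} itself, which is a definition-cum-conjecture: the paper contains no proof of it (nor could it), only a proof of the special case Theorem \ref{main}, so there is no argument to compare line by line. Your proposal rightly treats the general statement as open and instead sketches the reduction strategy, and that strategy is essentially the one the paper carries out: recast MGPC as the numerical statement $\mathrm{tr.deg}_{\mathbb{Q}}\overline{\mathbb{Q}}(P(M)) = \dim G_{\mathrm{And}}(M)$ with connectedness (Conjecture \ref{MGPC}), note that for abelian-type motives $G_{\mathrm{And}}$ equals the connected group $MT$ (Theorem \ref{mg = mt}), feed in Chudnovsky's theorem for a CM elliptic curve (Theorem \ref{CM}), use $\langle h(E^{n})\rangle \cong \langle h(E)\rangle$ (Proposition \ref{elliptic}, Proposition \ref{power}), control the transcendental part via $t_{2}(\mathrm{Km}(A)) \cong t_{2}(A) \cong t_{2}(E^{2})$ (Lemma \ref{surface b1}, Propositions \ref{t2} and \ref{kummer}), and check algebraicity of motivated classes through the known Hodge conjecture for powers of $E$. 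One point where your route genuinely differs, and is in fact sharper: you propagate MGPC from $M$ to every $N \in \langle M\rangle$ via the surjection of torsors $\Omega^{\mathrm{And}}_{M} \twoheadrightarrow \Omega^{\mathrm{And}}_{N}$ sending $\omega_{M}$ to $\omega_{N}$ (so density of $\omega_{M}$ forces density of $\omega_{N}$); this is correct and stronger than the paper's Lemmas \ref{trivial} and \ref{sum}, which need an auxiliary equality of transcendence degrees supplied by Lemma \ref{times} and Proposition \ref{t2}(1), and it would streamline \S 2. The one caution in your step (iii): the GPC as stated requires algebraicity of motivated classes on all powers of $X$, so you must invoke the divisor-generated Hodge rings of all powers $E^{m}$, not merely the Hodge conjecture for $E^{2}$.
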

\begin{rem}
(1) is due to \cite[Prop.~7.5.2.2 \& 23.1.4.1]{AndreBook}. 
(2) is due to \cite[Ch.~IX]{AndreG} or \cite[Conj.~2.12]{BostCharles}.
(1) is equivalent to Conjecture \ref{MGPC} which is related to the degree of transcendence over $\mathbb{Q}$ of the field generated by the periods of $X$ (or $h(X)$).
In \cite[Def.~2.19 (4)]{BostCharles}, for $X \in \mathcal{V}_{\overline{\mathbb{Q}}}$, the torsor of \textit{motivic periods} $\Omega^{\mathrm{mot}}_{X}$ on $X$ is defined 
and satisfies $\Omega^{\mathrm{And}}_{X} \subseteq \Omega^{\mathrm{mot}}_{X}$.
Also, $X$ satisfies the GPC if and only if $Z_{X} \cong \Omega^{\mathrm{mot}}_{X}$.
In \cite{KSV}, the GPC was formulated for homological motives with coefficients.
\end{rem}
In more heuristic terms, the GPC says that
``polynomial relations with coefficients in $\overline{\mathbb{Q}}$ among the periods of a smooth projective variety $X$ over $\overline{\mathbb{Q}}$ are determined 
by the algebraic cycles on the powers of $X$''.\\
\indent There are many good works about the GPC, e.g. \cite{AndreGalois}, \cite{ay2}, \cite{Be02}, \cite{Be20}, \cite{BostCharles}, \cite{H-MS}, \cite{HW}, \cite{KZ}, and \cite{sv}. 
However, the GPC has so far only been fully established in the following cases:
\begin{enumerate}
\item $M = 1 := h(\mathrm{Spec}(\overline{\mathbb{Q}}))$: this is trivial.
\item $M = h(\mathbb{P}^{n})$: this follows from the transcendence of $\pi$.
\item $M = h(E)$ for a CM elliptic curve $E$: this is due to Chudnovsky \cite{Chudnovsky}.
\end{enumerate}

Here we explain the object of this paper.
Let $A$ be an abelian surface and $-id_{A}$ the inversion morphism.
The Kummer surface associated with $A$ is the minimal resolution of the quotient surface $A/\langle -id_{A} \rangle$.
As well known, it is a K3 surface.\\
\indent Our main theorem is:
\begin{thm} \label{main}
Let $A$ be an abelian surface over $\overline{\mathbb{Q}}$ isogenous to the square $E^{2}$ of a CM elliptic curve.
Let $\mathrm{Km}(A)$ be the Kummer surface over $\overline{\mathbb{Q}}$ associated with $A$.
Then the GPC holds for $\mathrm{Km}(A)$.
\end{thm}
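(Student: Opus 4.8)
The plan is to use the observation highlighted in the abstract, that $h(\mathrm{Km}(A))$ lies in $\langle h(E) \rangle$ and in fact is isomorphic, \emph{through algebraic correspondences}, to a direct sum of Tate motives and a power of $h^{1}(E)$. First I would fix an isogeny $E^{2}\to A$, which induces an isomorphism $h(A)\cong h(E^{2})$ by algebraic correspondences, and recall the classical description of the cohomology of a Kummer surface. Writing $\widetilde{A}$ for the blow-up of $A$ along $A[2]$, $\sigma$ for the lift of $-\mathrm{id}_{A}$, and $\pi\colon\widetilde{A}\to\mathrm{Km}(A)=\widetilde{A}/\sigma$ for the quotient, $\pi^{\ast}$ identifies $h(\mathrm{Km}(A))$ with the image of the algebraic projector $\frac{1}{2}(1+\sigma^{\ast})$ on $h(\widetilde{A})$. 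Since $(-\mathrm{id}_{A})^{\ast}$ acts trivially on $H^{2}(A)$ and $\sigma$ fixes the class of each of the $16$ exceptional curves, the blow-up formula gives $h^{2}(\mathrm{Km}(A))\cong h^{2}(A)\oplus\mathbf{1}(-1)^{\oplus 16}$; combined with the (algebraic) K\"unneth decomposition $h^{2}(E^{2})\cong h^{1}(E)^{\otimes 2}\oplus\mathbf{1}(-1)^{\oplus 2}$ and with $h^{0}=\mathbf{1}$, $h^{4}=\mathbf{1}(-2)$, $h^{1}=h^{3}=0$, this yields $h(\mathrm{Km}(A))\cong\mathbf{1}\oplus h^{1}(E)^{\otimes 2}\oplus\mathbf{1}(-1)^{\oplus 18}\oplus\mathbf{1}(-2)$, all isomorphisms being induced by algebraic cycles; in particular $\langle h(\mathrm{Km}(A))\rangle\subseteq\langle h(E)\rangle$.

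Next I would deduce the MGPC for $h(\mathrm{Km}(A))$. By Chudnovsky's theorem and Conjecture~\ref{Motivic GC}(1), the MGPC holds for $h(E)$, i.e. $Z_{E}=\Omega^{\mathrm{And}}_{E}$. The inclusion $\langle h(\mathrm{Km}(A))\rangle\hookrightarrow\langle h(E)\rangle$, being a full Tannakian subcategory closed under subquotients, induces a faithfully flat restriction homomorphism $G_{h(E)}\twoheadrightarrow G_{h(\mathrm{Km}(A))}$ of Andr\'e motivic Galois groups, hence a faithfully flat --- in particular dominant --- restriction morphism of period torsors $r\colon\Omega^{\mathrm{And}}_{E}\to\Omega^{\mathrm{And}}_{\mathrm{Km}(A)}$ with $r(\omega_{E})=\omega_{\mathrm{Km}(A)}$. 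Consequently $Z_{\mathrm{Km}(A)}=\overline{\{\omega_{\mathrm{Km}(A)}\}}=\overline{r(\{\omega_{E}\})}=\overline{r(Z_{E})}=\overline{r(\Omega^{\mathrm{And}}_{E})}=\Omega^{\mathrm{And}}_{\mathrm{Km}(A)}$, where the fourth equality uses $Z_{E}=\Omega^{\mathrm{And}}_{E}$ and the last uses dominance of $r$. This is the first half of the GPC for $\mathrm{Km}(A)$.

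It remains to show that motivated classes on the powers $\mathrm{Km}(A)^{n}$ are algebraic. Tensoring the isomorphism above, $h(\mathrm{Km}(A)^{n})=h(\mathrm{Km}(A))^{\otimes n}$ is, through algebraic correspondences, a direct sum of Tate motives and direct summands of $h(E^{2n})$. A motivated --- a fortiori Hodge --- class on $\mathrm{Km}(A)^{n}$ therefore corresponds, via these algebraic correspondences, to classes on Tate summands, which are manifestly algebraic, together with a Hodge class on $E^{2n}$; and the latter is algebraic, because the Mumford--Tate group of $E$ is a two-dimensional torus and the ring of Hodge classes on a self-power of a CM elliptic curve is generated by divisor classes and graphs of endomorphisms (the ``non-degenerate'' CM situation, carrying no exotic Weil classes). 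Transporting back along the same algebraic correspondences shows the original class is algebraic on $\mathrm{Km}(A)^{n}$. Together with the previous paragraph, this gives the GPC for $\mathrm{Km}(A)$.

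The conceptual crux --- and where I expect the real work --- is the first step: one must check that the non-trivial transcendental part of $h^{2}(\mathrm{Km}(A))$ (a weight-$2$ CM Hodge structure of rank two, sitting inside $\mathrm{Sym}^{2}h^{1}(E)$) is genuinely cut out of $h^{1}(E)^{\otimes 2}$ by \emph{algebraic} projectors, so that no information about cycles on powers is lost in passing to $\langle h(E)\rangle$. Once this is in place, the MGPC step is purely a matter of Andr\'e's Tannakian formalism applied to the known case $M=h(E)$, and the algebraicity step reduces, via the algebraic decomposition, to the classical Hodge conjecture for self-powers of a CM elliptic curve. The remaining bookkeeping in the first step --- assembling the K\"unneth, blow-up and quotient correspondences and tracking the $\sigma$-action on the exceptional curves --- is routine.
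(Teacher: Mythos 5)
Your argument is correct, but it takes a genuinely different route from the paper's. For the reduction to the elliptic curve, you redo the geometry by hand (blow-up of $A[2]$, the projector $\tfrac12(1+\sigma^{*})$, the action on the exceptional curves) to get the algebraic decomposition $h(\mathrm{Km}(A))\cong 1\oplus h_{1}(E)^{\otimes 2}\oplus\mathbb{L}^{\oplus 18}\oplus\mathbb{L}^{\otimes 2}$, whereas the paper simply quotes $t_{2}(\mathrm{Km}(A))\cong t_{2}(A)$ from Kahn--Murre--Pedrini, isogeny invariance of motives from Deninger--Murre, and a general lemma for surfaces with $b_{1}=b_{3}=0$; the content is the same, and your worry that the ``real work'' is hidden in this step is unfounded, since all the correspondences involved (isogeny, Chow--K\"unneth for abelian varieties, blow-up, finite quotient) are standard and algebraic. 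The more substantive divergence is the MGPC step: you deduce it from the purely Tannakian fact that the inclusion $\langle h(\mathrm{Km}(A))\rangle\subseteq\langle h(E)\rangle$ gives a surjection of motivated Galois groups and hence a dominant, point-compatible restriction of period torsors, so that $Z_{E}=\Omega^{\mathrm{And}}_{E}$ (Chudnovsky) pushes forward to $Z_{\mathrm{Km}(A)}=\Omega^{\mathrm{And}}_{\mathrm{Km}(A)}$. The paper instead works with the transcendence-degree formulation: it proves the GPC is stable under powers (Lemma \ref{times}, Proposition \ref{power}), compares $\mathrm{tr.deg}$ of the periods of $t_{2}(E^{2})\oplus\mathbb{L}$ and of $h(E^{2})$ (Proposition \ref{t2}(1)), and uses Andr\'e's theorem that motivated Galois groups of abelian motives are Mumford--Tate groups together with Lemma \ref{sum}. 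Your torsor argument is shorter and avoids Mumford--Tate groups altogether (it shows in one stroke that the MGPC is inherited by every object of $\langle h(E)\rangle$); the paper's route is more elementary and makes the numerical period count explicit. Finally, for algebraicity of motivated classes you reduce classes on $\mathrm{Km}(A)^{n}$ to Hodge classes on $E^{2n}$ and invoke the classical Hodge conjecture for powers of a CM elliptic curve; the paper invokes it only for $E^{2}$ (Tate, Murasaki), so your treatment of the ``powers of $X$'' clause in the definition of the GPC is in fact the more careful of the two.
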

This paper is organized as follows.
In§2, we give a formulation of the MGPC using the motivated Galois group.
We collect and prove some facts about the GPC, Mumford-Tate groups, and K3 surfaces.
In§3, we prove our main theorem.

\section{Motivated version of the Grothendieck period conjecture}
\subsection{Motivated Galois groups}
Let $k \subseteq \mathbb{C}$ be a subfield and $M \in \mathcal{M}_{k}^{\mathrm{And}}$ an Andr\'e motive.
We can define the \textit{motivated Galois group} of $M$ to be the group scheme
\[ G_{\mathrm{And}}(M) : = \mathrm{Aut}^{\otimes}(H_{B} \mid_{\langle M \rangle})  \]
of automorphisms of the restriction to $\langle M \rangle$ of the $\otimes$-functor $H_{B}$.
Note that $\Omega^{\mathrm{And}}_{M}$ is a torsor under $G_{\mathrm{And}}(M)$.
We write $\overline{\mathbb{Q}}(P(M))$ for the subfield of $\mathbb{C}$ genereted by the period $P(M)$ over $\overline{\mathbb{Q}}$.
Conjecture \ref{Motivic GC} (1) is equivalent to$:$ 
\begin{conj} (\cite[Prop.~7.5.2.2 \& 23.1.4.1]{AndreBook}) \label{MGPC} Let $M \in \mathcal{M}_{\overline{\mathbb{Q}}}^{\mathrm{And}}$ be an Andr\'e motive.
We say that $M$ satisfies the MGPC if $\Omega^{\mathrm{And}}_{M}$ is connected and 
\[ \mathrm{tr.deg}_{\mathbb{Q}} \overline{\mathbb{Q}}(P(M)) = \mathrm{dim} \ G_{\mathrm{And}}(M). \]
\end{conj}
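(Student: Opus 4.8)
The plan is to deduce the asserted equivalence from a general fact about pointed Tannakian torsors --- in essence \cite[Prop.~7.5.2.2 \& 23.1.4.1]{AndreBook} --- and then to prove that fact by an elementary dimension count. Write $G := G_{\mathrm{And}}(M)$ and $\Omega := \Omega^{\mathrm{And}}_{M}$; since $\omega_{M} \in \Omega(\mathbb{C})$, the torsor $\Omega$ under the affine $\mathbb{Q}$-group scheme $G$ is non-empty. First I would record that, as we work in characteristic $0$, $G$ is smooth, hence so is the $G$-torsor $\Omega$; being non-empty and of finite type over $\mathbb{Q}$ it has a $\overline{\mathbb{Q}}$-point, so $\Omega_{\overline{\mathbb{Q}}} \cong G_{\overline{\mathbb{Q}}}$, which gives $\dim \Omega = \dim G$ and equidimensionality. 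Smoothness also forces $\Omega$ to be reduced with integral connected components; in particular $\Omega$ is connected if and only if $\Omega$ is integral.

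The next, and most delicate, step is to identify the periods of $M$ with coordinates on $\Omega$. Because $\langle M \rangle$ is $\otimes$-generated by $M$, I would invoke the Tannakian formalism (Chevalley's theorem) to produce an object $N \in \langle M \rangle$ --- built from $M$ and $M^{\vee}$ by sums and tensor products --- such that $G \hookrightarrow \mathrm{GL}(H_{B}(N))$ is a closed immersion; then $\Omega \hookrightarrow \underline{\mathrm{Isom}}\bigl(H_{DR}(N), H_{B}(N)\bigr)$ is also closed, exhibiting $\Omega$ as a closed subscheme of an affine space whose coordinate functions, in chosen bases, are exactly the entries of the period matrix of $N$. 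Since the periods of $N$ are polynomials with $\overline{\mathbb{Q}}$-coefficients in those of $M$ and conversely, the subfield of $\mathbb{C}$ they generate over $\overline{\mathbb{Q}}$ is $\overline{\mathbb{Q}}(P(M))$, independently of all the choices. Hence $Z_{M}$, the Zariski closure of $\omega_{M}$, is the integral closed subscheme of $\Omega$ whose function field is generated by these coordinates, so that $\dim Z_{M} = \mathrm{tr.deg}_{\mathbb{Q}} \overline{\mathbb{Q}}(P(M))$; in particular the inequality $\mathrm{tr.deg}_{\mathbb{Q}} \overline{\mathbb{Q}}(P(M)) \leq \dim G_{\mathrm{And}}(M)$ holds unconditionally, as $Z_{M} \subseteq \Omega$.

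With these two ingredients the equivalence is immediate. If $Z_{M} = \Omega$, then $\Omega$ is the Zariski closure of a single point, so it is integral --- in particular connected --- and $\dim \Omega = \dim Z_{M} = \mathrm{tr.deg}_{\mathbb{Q}}\overline{\mathbb{Q}}(P(M))$, which together with the first step yields the transcendence-degree equality; this is Conjecture \ref{Motivic GC}(1) $\Rightarrow$ Conjecture \ref{MGPC}. Conversely, if $\Omega$ is connected and $\mathrm{tr.deg}_{\mathbb{Q}}\overline{\mathbb{Q}}(P(M)) = \dim G$, then $\Omega$ is integral by the first step and $Z_{M}$ is a closed integral subscheme with $\dim Z_{M} = \mathrm{tr.deg}_{\mathbb{Q}}\overline{\mathbb{Q}}(P(M)) = \dim G = \dim \Omega$; since a proper closed subscheme of an integral finite-type $\mathbb{Q}$-scheme has strictly smaller dimension, I conclude $Z_{M} = \Omega$, and hence Conjecture \ref{MGPC} $\Rightarrow$ Conjecture \ref{Motivic GC}(1).

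The genuinely routine part is the algebraic geometry of the last paragraph; the work, such as it is, lies in the second step --- unwinding the Tannakian formalism so that $\Omega^{\mathrm{And}}_{M}$ is seen as a finite-type $\mathbb{Q}$-scheme of dimension $\dim G_{\mathrm{And}}(M)$ carrying the periods of $M$ as coordinates, and checking the easy but essential book-keeping that $\overline{\mathbb{Q}}(P(M))$ is intrinsic (independent of the auxiliary generator $N$ and the chosen bases). I would also be careful to keep connectedness over $\mathbb{Q}$ distinct from geometric connectedness: it is the former --- equivalently, by smoothness, integrality of $\Omega^{\mathrm{And}}_{M}$ --- that is both necessary and sufficient here, and the hypothesis that $\Omega^{\mathrm{And}}_{M}$ be connected cannot be dropped from Conjecture \ref{MGPC}, since otherwise the point $\omega_{M}$, being confined to a single irreducible component, could never be Zariski-dense.
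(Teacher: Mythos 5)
Your proposal is correct, but note what the paper actually does with this statement: Conjecture \ref{MGPC} is phrased as a definition, and the mathematical content --- its equivalence with Conjecture \ref{Motivic GC}(1) --- is not proved in the paper at all; it is delegated to \cite[Prop.~7.5.2.2 \& 23.1.4.1]{AndreBook}. The only fragment the paper does prove is the unconditional inequality (Proposition \ref{inequality}), and there it uses exactly your mechanism: $\mathrm{tr.deg}_{\mathbb{Q}}\overline{\mathbb{Q}}(P(M)) = \dim \overline{\{\omega_{M}\}}$ together with $\dim \Omega^{\mathrm{And}}_{M} = \dim G_{\mathrm{And}}(M)$ from the torsor structure. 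So your argument reconstructs the cited result of Andr\'e rather than paralleling a proof in the paper, and it does so correctly: smoothness of the torsor in characteristic $0$ (so connected $\Leftrightarrow$ integral), identification of $\dim Z_{M}$ with the transcendence degree of the period field via a closed embedding into an Isom-scheme, and the dimension comparison in an integral finite-type scheme. One simplification you could make: since $\langle M \rangle$ is $\otimes$-generated by $M$ itself, $G_{\mathrm{And}}(M)$ is already a closed subgroup of $\mathrm{GL}(H_{B}(M))$ and $\Omega^{\mathrm{And}}_{M}$ embeds closedly in $\underline{\mathrm{Isom}}(H_{DR}(M), H_{B}(M))$, so the auxiliary object $N$ produced via Chevalley, and the attendant bookkeeping that $\overline{\mathbb{Q}}(P(N)) = \overline{\mathbb{Q}}(P(M))$, can be dispensed with; the coordinates of $\omega_{M}$ are then literally the entries of the period matrix of $M$. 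Your closing remark on keeping connectedness over $\mathbb{Q}$ (rather than geometric connectedness) and on its necessity is apt and consistent with the paper's parenthetical comment that irreducibility and connectedness of the smooth torsor agree.
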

\indent The irreducibility and connectedness of $\Omega^{\mathrm{And}}_{M}$ are equivalent since this is a smooth torsor.
The inequality $\leq$ is known.
\begin{prop} \label{inequality}
For any $M \in \mathcal{M}_{\overline{\mathbb{Q}}}^{\mathrm{And}}$, $\mathrm{tr.deg}_{\mathbb{Q}}\overline{\mathbb{Q}}(P(M)) \leq \mathrm{dim} \ G_{\mathrm{And}}(M)$.
\end{prop}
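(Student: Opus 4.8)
The plan is to interpret the periods $P(M)$ as generators of the field of definition of the canonical $\mathbb{C}$-point $\omega_M$ sitting on the affine $\overline{\mathbb{Q}}$-scheme $\Omega^{\mathrm{And}}_M$, and then to apply the elementary bound that the transcendence degree of the residue field of a point is at most the dimension of the ambient variety of finite type.

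First I would record that the torsor has the expected dimension. Since $\langle M\rangle$ is a Tannakian category over $\mathbb{Q}$ with fibre functor $H_B$, the group $G_{\mathrm{And}}(M) = \mathrm{Aut}^{\otimes}(H_{B\mid\langle M\rangle})$ is an affine algebraic group of finite type over $\mathbb{Q}$; after base change of $H_B$ to $\overline{\mathbb{Q}}$, the scheme $\Omega^{\mathrm{And}}_M = \mathrm{Isom}^{\otimes}(H_{DR\mid\langle M\rangle}, H_{B\mid\langle M\rangle})$ is an affine scheme of finite type over $\overline{\mathbb{Q}}$ on which $G_{\mathrm{And}}(M)_{\overline{\mathbb{Q}}}$ acts simply transitively by post-composition. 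It is nonempty, since it contains $\omega_M$, hence a torsor under $G_{\mathrm{And}}(M)_{\overline{\mathbb{Q}}}$, and therefore $\dim\Omega^{\mathrm{And}}_M = \dim G_{\mathrm{And}}(M)$.

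Next I would make the $\overline{\mathbb{Q}}$-structure explicit enough to identify $\kappa(\omega_M)$ with $\overline{\mathbb{Q}}(P(M))$. Put $N := M\oplus M^{\vee}$, a self-dual tensor generator of $\langle M\rangle$, and fix $\overline{\mathbb{Q}}$-bases of $H_{DR}(N^{\otimes a})$ and $H_B(N^{\otimes a})$ for $a$ in a finite generating range. A point of $\Omega^{\mathrm{And}}_M$ is then the datum of invertible matrices --- the comparison isomorphisms in these bases --- subject to polynomial equations over $\overline{\mathbb{Q}}$ expressing compatibility with motivated correspondences, with $\otimes$, with duality, and with the unit object; this exhibits $\Omega^{\mathrm{And}}_M$ as a closed subscheme of a product of copies of $\mathrm{GL}_n$, with $\omega_M$ the tuple of period matrices of the $N^{\otimes a}$. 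The residue field $\kappa(\omega_M)\subseteq\mathbb{C}$ is then generated over $\overline{\mathbb{Q}}$ by those matrix entries and their inverses, so $\kappa(\omega_M)\subseteq\overline{\mathbb{Q}}(P(M))$; and since the entries of the period matrix of $M$ already generate $\overline{\mathbb{Q}}(P(M))$, in fact $\kappa(\omega_M) = \overline{\mathbb{Q}}(P(M))$.

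Finally, let $Z_M = \overline{\{\omega_M\}}$ be the Zariski closure of $\omega_M$ in $\Omega^{\mathrm{And}}_M$, an integral closed $\overline{\mathbb{Q}}$-subscheme of finite type whose generic point has residue field $\kappa(\omega_M) = \overline{\mathbb{Q}}(P(M))$. For such a scheme the Krull dimension equals the transcendence degree of the function field over the base field, and this transcendence degree over $\overline{\mathbb{Q}}$ coincides with the one over $\mathbb{Q}$ because $\overline{\mathbb{Q}}/\mathbb{Q}$ is algebraic; as $Z_M\subseteq\Omega^{\mathrm{And}}_M$ is a closed subscheme,
\[ \mathrm{tr.deg}_{\mathbb{Q}}\overline{\mathbb{Q}}(P(M)) = \dim Z_M \le \dim\Omega^{\mathrm{And}}_M = \dim G_{\mathrm{And}}(M). \]
The only delicate point is the identification $\kappa(\omega_M) = \overline{\mathbb{Q}}(P(M))$ in the previous step: one must check that forming duals and tensor powers, which brings in denominators through matrix inversion, does not enlarge the field generated by the periods. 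This bookkeeping --- a careful choice of the generator $N$ and of the bases --- is the only real obstacle, and it is exactly the content of the reformulation cited in \cite[23.1.4]{AndreBook}.
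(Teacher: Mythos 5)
Your argument is correct and follows essentially the same route as the paper's proof: bound $\mathrm{tr.deg}_{\mathbb{Q}}\overline{\mathbb{Q}}(P(M))$ by the dimension of the Zariski closure $\overline{\{\omega_M\}}$ inside $\Omega^{\mathrm{And}}_M$, and use that the torsor has the same dimension as $G_{\mathrm{And}}(M)$. The only difference is that you spell out the identification $\kappa(\omega_M)=\overline{\mathbb{Q}}(P(M))$ (via a tensor generator and Cramer's rule for the inverses), which the paper simply asserts as the equality $\mathrm{tr.deg}_{\mathbb{Q}}\overline{\mathbb{Q}}(P(M))=\dim\overline{\{\omega_M\}}$, citing Andr\'e's book.
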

\begin{proof}
Note that $\mathrm{tr.deg}_{\mathbb{Q}}\overline{\mathbb{Q}}(P(M)) = \mathrm{dim}(Z_{M})$.
We have $\mathrm{dim} \ \Omega^{\mathrm{And}}_{M}  = \mathrm{dim} \ G_{\mathrm{And}}(M)$ 
since $\Omega^{\mathrm{And}}_{M}$ is a torsor under $G_{\mathrm{And}}(M)$.
So we get the inequality by $Z_{M} \subseteq \Omega^{\mathrm{And}}_{M}$.
\end{proof}
Thanks to the referees' suggestion, we provide a remark that will be used: 
\begin{rem} \label{trivial}
Note that if $M$ and $N$ are motives in $\mathcal{M}_{\overline{\mathbb{Q}}}^{\mathrm{And}}$
such that $\langle M \rangle \cong \langle N \rangle$, then $G_{\mathrm{And}}(M) \cong G_{\mathrm{And}}(N)$.
In particular, if additionally $\mathrm{tr.deg}_{\mathbb{Q}} \overline{\mathbb{Q}}(P(M)) = \mathrm{tr.deg}_{\mathbb{Q}}\overline{\mathbb{Q}}(P(N))$ and $G_{\mathrm{And}}(M)$ is connected, we see that the MGPC for $M$ is equivalent to the MGPC for $N$.
\end{rem}

Finally, we provide an elementary lemma.
\begin{lem} \label{times}
Let $M$ and $N$ be motives in $\mathcal{M}_{\overline{\mathbb{Q}}}^{\mathrm{And}}$.
\begin{enumerate}
\item $\overline{\mathbb{Q}}(P(M \oplus N)) = \overline{\mathbb{Q}}(P(M), P(N))$.
\item $\mathrm{tr.deg}_{\mathbb{Q}}\overline{\mathbb{Q}}(P(M^{\otimes n})) = 
\mathrm{tr.deg}_{\mathbb{Q}}\overline{\mathbb{Q}}(P(M))$ for $n \geq 1$.
\end{enumerate}
\end{lem}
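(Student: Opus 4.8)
The plan is to reduce both assertions to the fact that $\omega$ is an isomorphism of $\otimes$-functors, so that $\omega_{M \oplus N} = \omega_{M} \oplus \omega_{N}$ and $\omega_{M^{\otimes n}} = \omega_{M}^{\otimes n}$, together with the standing observation that $\overline{\mathbb{Q}}(P(M))$ does not depend on the choice of bases of $H_{DR}(M)$ and $H_{B}(M)$: any two choices differ by matrices with entries in $\overline{\mathbb{Q}}$, which do not change the field generated by the period matrix.

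For (1), I would fix bases of $H_{DR}(M)$, $H_{DR}(N)$, $H_{B}(M)$, $H_{B}(N)$. Under the canonical identifications $H_{DR}(M \oplus N) = H_{DR}(M) \oplus H_{DR}(N)$ and $H_{B}(M \oplus N) = H_{B}(M) \oplus H_{B}(N)$ (using that $H_{DR}$, $H_{B}$ are additive and that the inclusions and projections of a direct sum are motivated), the isomorphism $\omega_{M \oplus N}$ equals $\omega_{M} \oplus \omega_{N}$; hence in the concatenated bases its matrix is block-diagonal with blocks the period matrices of $M$ and of $N$. The set of entries is therefore $P(M) \cup P(N) \cup \{0\}$, and by the basis-independence remark we get $\overline{\mathbb{Q}}(P(M \oplus N)) = \overline{\mathbb{Q}}(P(M), P(N))$.

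For (2), I would write $H_{DR}(M^{\otimes n}) = H_{DR}(M)^{\otimes n}$, $H_{B}(M^{\otimes n}) = H_{B}(M)^{\otimes n}$, and $\omega_{M^{\otimes n}} = \omega_{M}^{\otimes n}$. Choose bases $(e_{i})$ of $H_{DR}(M)$ and $(f_{j})$ of $H_{B}(M)$ and let $(p_{ij})$ be the matrix of $\omega_{M}$, so $\omega_{M}(e_{i}) = \sum_{j} p_{ij} f_{j}$. In the induced tensor bases the matrix of $\omega_{M^{\otimes n}}$ has entries the products $p_{i_{1} j_{1}} \cdots p_{i_{n} j_{n}}$. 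In particular $\overline{\mathbb{Q}}(P(M^{\otimes n})) \subseteq \overline{\mathbb{Q}}(P(M))$; and taking all the indices equal shows that $p_{ij}^{n}$ occurs among these entries, so $p_{ij}^{n} \in \overline{\mathbb{Q}}(P(M^{\otimes n}))$ for every $i, j$. Since the $p_{ij}$ generate $\overline{\mathbb{Q}}(P(M))$ over $\overline{\mathbb{Q}}$, the extension $\overline{\mathbb{Q}}(P(M))/\overline{\mathbb{Q}}(P(M^{\otimes n}))$ is algebraic, and therefore $\mathrm{tr.deg}_{\mathbb{Q}}\overline{\mathbb{Q}}(P(M^{\otimes n})) = \mathrm{tr.deg}_{\mathbb{Q}}\overline{\mathbb{Q}}(P(M))$.

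This lemma is essentially formal and presents no genuine obstacle; the only points requiring care are the monoidality of $\omega$ (needed for the decompositions of $\omega_{M \oplus N}$ and $\omega_{M^{\otimes n}}$) and the basis-independence of the period field, both of which are standard. It is worth noting that in (2) one cannot replace transcendence degrees by the fields themselves: already $\overline{\mathbb{Q}}(\pi^{2}) \subsetneq \overline{\mathbb{Q}}(\pi)$, which is precisely why the statement is phrased as it is.
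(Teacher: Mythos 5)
Your proof is correct and takes essentially the same route as the paper: the crucial step --- noting that $p_{ij}^{n}$ occurs among the entries of the period matrix of $M^{\otimes n}$, so that $\overline{\mathbb{Q}}(P(M))/\overline{\mathbb{Q}}(P(M^{\otimes n}))$ is algebraic and the transcendence degrees agree --- is exactly the paper's argument. The only difference is scope: the paper treats just the case $n=2$ with $\dim H_{DR}(M)=2$ (all it needs later) and calls (1) clear, whereas you handle general $n$ and dimension and spell out the block-diagonal identification $\omega_{M\oplus N}=\omega_{M}\oplus\omega_{N}$ for (1).
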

\begin{proof}
(1) is clear.
(2) For our aim, we only prove the case where $n = 2$ and $\mathrm{dim}(H_{DR}^{*}(M)) = 2$.
Then we set $K : = \overline{\mathbb{Q}}(P(M^{\otimes 2}))$ and $L : = \overline{\mathbb{Q}}(P(M))$.
Taking bases of $H^{*}_{DR}(M)$ and $H^{*}_{B}(M)$, 
we set $\overline{\mathbb{Q}}(P(M)) = \overline{\mathbb{Q}}(p_{11}, p_{12}, p_{21}, p_{22})$.
Then 
\[
K = \overline{\mathbb{Q}}( \{ p_{ij}p_{kl} \ | \ 1 \leq i, j, k, l \leq 2 \} ) 
\ \ \ \subset \ \ \ \overline{\mathbb{Q}}(p_{11}, p_{12}, p_{21}, p_{22}) = L. \]
Since $p_{ij}^{2} \in K$ for any $i, j$, the elements $p_{ij} \in L$ are algebraic over $K$, and hence $L/K$ is algebraic. Thus we have
$\mathrm{tr.deg}_{\mathbb{Q}}(L) = \mathrm{tr. deg}_{\mathbb{Q}}(K)$.
\end{proof}

\subsection{Mumford-Tate groups}
We recall some properties of Mumford-Tate groups.\\
\indent A pure $\mathbb{Q}$-Hodge structure consists of a finite-dimensional $\mathbb{Q}$-vector space $H$
together with a decomposition $H_{\mathbb{C}} = \oplus_{i, j} H^{i, j}$ such that $\overline{H^{i, j}} = H^{j, i}$.
In other words, $H$ is a finite-dimensional $\mathbb{Q}$-vector space equipped with a representation 
$h : \mathbb{S} : = \mathrm{Res}_{\mathbb{C}/\mathbb{R}}\mathbb{G}_{m, \mathbb{C}} \rightarrow GL(H_{\mathbb{R}})$
such that via the composition $\mathbb{G}_{m, \mathbb{R}} \hookrightarrow \mathbb{S} \rightarrow  GL(H_{\mathbb{R}})$
an element $t \in \mathbb{G}_{m, \mathbb{R}}$ acts as $t^{-n} \cdot \mathrm{Id}$.
The \textit{Mumford-Tate group} $MT(H)$ of $H$
is the smallest algebraic subgroup of $GL(H)$ over $\mathbb{Q}$
such that $h(\mathbb{S}) \subset MT(H_{\mathbb{R}})$.
By definition, any MT group is always connected.
\begin{prop} \label{property of mt}
If $H_{1}$ and $H_{2}$ are $\mathbb{Q}$-Hodge structures, then $MT(H_{1} \oplus H_{2}) \subset MT(H_{1}) \times MT(H_{2})$
as subgroups of $GL(H_{1} \oplus H_{2})$, and the projection maps $p_{i} : MT(H_{1} \oplus H_{2}) \rightarrow MT(H_{i})$
are surjective.
\end{prop}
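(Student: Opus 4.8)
The plan is to use the defining minimality property of the Mumford--Tate group twice. Write $H := H_1 \oplus H_2$, and let $h \colon \mathbb{S} \to GL(H_{\mathbb{R}})$ be its Hodge cocharacter. Because the decomposition $H = H_1 \oplus H_2$ is one of Hodge structures, $h$ respects the block decomposition and factors as the composite $\mathbb{S} \xrightarrow{\Delta} \mathbb{S} \times \mathbb{S} \xrightarrow{h_1 \times h_2} GL((H_1)_{\mathbb{R}}) \times GL((H_2)_{\mathbb{R}}) \hookrightarrow GL(H_{\mathbb{R}})$, the last arrow being the block-diagonal embedding. Under this embedding, $MT(H_1) \times MT(H_2)$ is a $\mathbb{Q}$-algebraic subgroup of $GL(H)$.

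For the inclusion: since $h_i(\mathbb{S}) \subset MT(H_i)_{\mathbb{R}}$ by definition of $MT(H_i)$, the factorization above gives $h(\mathbb{S}) \subset (MT(H_1) \times MT(H_2))_{\mathbb{R}}$. The minimality of $MT(H)$ among $\mathbb{Q}$-subgroups of $GL(H)$ whose real points contain $h(\mathbb{S})$ then yields $MT(H) \subset MT(H_1) \times MT(H_2)$. For surjectivity of $p_1$ (the argument for $p_2$ being identical), note that $p_1(MT(H))$ is a closed $\mathbb{Q}$-algebraic subgroup of $GL(H_1)$, since homomorphic images of affine algebraic groups over a field are closed. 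From $h(\mathbb{S}) \subset MT(H)_{\mathbb{R}}$ and $p_1 \circ h = h_1$ we obtain $h_1(\mathbb{S}) \subset p_1(MT(H))_{\mathbb{R}}$, whence $MT(H_1) \subset p_1(MT(H))$ by minimality of $MT(H_1)$; the opposite inclusion is immediate from the first part, so $p_1(MT(H)) = MT(H_1)$.

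The argument is purely formal. The only step worth flagging is the claim that $p_i(MT(H))$ is a closed algebraic subgroup of $GL(H_i)$ --- needed so that the minimality property of $MT(H_i)$ can be applied to it --- which is the standard fact on images of morphisms of affine group schemes over a field; I would invoke it rather than reprove it.
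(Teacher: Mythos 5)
Your argument is correct and is exactly the standard minimality argument that the paper compresses into ``the assertions follow from the definition of the MT groups'': the diagonal factorization of $h$ through $MT(H_1)\times MT(H_2)$ gives the inclusion, and closedness of the image $p_i(MT(H))$ plus minimality of $MT(H_i)$ gives surjectivity. You have simply written out the details the paper leaves implicit, so there is nothing to change.
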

\begin{proof}
The assertions follow from the definition of the MT groups.
\end{proof}
Let $HS$ be the Tannakian category of pure Hodge structures 
with the forgetful fiber functor $\omega$ associating to $H$ the underlying $\mathbb{Q}$-vector space (see \cite{Deligne}).\\
\indent The Mumford-Tate group of $H$ can be described as
\[ MT(H) = \mathrm{Aut}^{\otimes}(\omega \mid_{\langle H \rangle}). \]
When $H$ is polarizable, $\langle H \rangle$ is semi-simple, so $MT(H)$ is reductive.\\
\indent Let $k \subseteq \mathbb{C}$ be a subfield.  Let $\mathcal{M}_{k, 1}^{\mathrm{And}}$ be the category of 1-motives, 
i.e. it is a thick abelian subcategory of $\mathcal{M}_{k}^{\mathrm{And}}$
whose object is a direct sum of the motive $h_{1}(A)$ of an abelian variety, 
the copies of the Lefschetz motive $\mathbb{L}$, and the unit motive $1$.
Let $\mathcal{M}_{k, 1}^{\mathrm{And}, \otimes}$ be the Tannakian subcategory of 
$\mathcal{M}_{k}^{\mathrm{And}}$ generated by 1-motives,
i.e. it is closed under sums, subquotients, duals, and tensor products.
This is equivalent to the Tannakian category generated by the motives of abelian varieties.
Recall that for smooth projective varieties
\begin{center}
 ``algebraic class $\Rightarrow$ motivated class $\Rightarrow$ absolute Hodge class $\Rightarrow$ Hodge class''
 \end{center}
The Hodge conjecture says that any Hodge class is algebraic. Andr\'e showed:
\begin{thm} (\cite{AndreIHES}) \label{mg = mt}
Let $k = \overline{k} \subset \mathbb{C}$.
The Hodge realization $\mathcal{M}_{k, 1}^{\mathrm{And}, \otimes} \rightarrow HS$ is fully faithful
\footnote{Andr\'e extended Thm.~\ref{mg = mt} to the Hodge realization from the category of mixed motives \cite{Andre1motives}.}.
In particular, the MG group of any motive in $\mathcal{M}_{k, 1}^{\mathrm{And}, \otimes}$ coincides with its MT group and any Hodge class on abelian varieties is motivated.
\end{thm}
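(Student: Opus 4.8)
The approach I would take is to import Deligne's proof that Hodge classes on abelian varieties are absolutely Hodge and to upgrade its conclusion from ``absolutely Hodge'' to ``motivated'' throughout. What makes this possible is that the Lefschetz standard conjecture is a theorem for abelian varieties (Lieberman, Kleiman), so that on the Tannakian category generated by motives of abelian varieties all the expected operators --- the K\"unneth projectors and the Lefschetz operators $L$, $\Lambda$, $\ast$ --- are available as \emph{motivated} cycles; in fact Andr\'e's construction of $\mathcal{M}_{k}^{\mathrm{And}}$ already produces these from the Hard Lefschetz theorem and the class of the diagonal. First I would make the standard Tannakian reduction: since $H_{B}$ is a $\otimes$-functor of Tannakian categories and $\mathcal{M}_{k,1}^{\mathrm{And},\otimes}$ is semisimple by polarizability, full faithfulness is equivalent to the surjectivity of $\mathrm{Hom}_{\mathcal{M}}(1,M)\to\mathrm{Hom}_{HS}(1,H_{B}(M))$ for all objects $M$, i.e.\ to the assertion that every Hodge class on every object of $\mathcal{M}_{k,1}^{\mathrm{And},\otimes}$ is motivated. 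Using $h(A\times B)\cong h(A)\otimes h(B)$ and $h^{n}(A)\cong\bigwedge^{n}h_{1}(A)$, every such object is, up to Tate twist and passage to a direct summand, built from the motives $h^{\ast}(A)$ of abelian varieties $A/\overline{k}$, and a Hodge class on a Tate twist of $h^{\ast}(A)$ is precisely a Hodge class in some $H^{2p}(A,\mathbb{Q}(p))$. So everything reduces to the ``in particular'' statement: every Hodge class on an abelian variety over $\overline{k}$ is motivated.

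To prove this I would run Deligne's structural argument while tracking motivatedness, the central tool being the motivated analogue of Deligne's Principle B (one of the main technical results of \cite{AndreIHES}): if $f\colon\mathcal{X}\to S$ is a smooth projective morphism of smooth connected $\overline{k}$-varieties and $\alpha\in\Gamma(S,R^{2p}f_{\ast}\mathbb{Q}(p))$ is a flat global section which is a Hodge class at every point of $S$ and is motivated on $\mathcal{X}_{s_{0}}$ for one $s_{0}\in S(\overline{k})$, then $\alpha_{s}$ is motivated on $\mathcal{X}_{s}$ for every $s\in S(\overline{k})$. Granting this, Deligne's scheme applies: (i) by the density of CM (special) points on the relevant Shimura variety, a given Hodge class on an arbitrary $A/\overline{k}$ extends to a flat, everywhere-Hodge section of the universal family that specializes at some CM point to a class on a CM abelian variety, so by Principle B one is reduced to $A$ of CM type; (ii) for $A$ of CM type, $MT(H^{1}(A))$ is a torus, so by Deligne the Hodge classes of $A$ are spanned by products of divisor classes --- which are algebraic, hence motivated --- and of split Weil classes; (iii) a general Weil class becomes, after a deformation preserving the $E$-action, a \emph{split} Weil class on a special fibre, and split Weil classes are algebraic by Deligne, hence motivated, so Principle B again transports motivatedness back to $A$.

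Full faithfulness then makes $H_{B}$ an equivalence of $\mathcal{M}_{k,1}^{\mathrm{And},\otimes}$ onto a Tannakian subcategory of $HS$, and it restricts to an equivalence between $\langle M\rangle$ and $\langle H_{B}(M)\rangle$ for each $M$; since $H_{B}$ as a fibre functor is carried to the forgetful fibre functor $\omega$ under this equivalence, Tannakian duality gives $G_{\mathrm{And}}(M)=\mathrm{Aut}^{\otimes}(H_{B}\mid_{\langle M\rangle})\cong\mathrm{Aut}^{\otimes}(\omega\mid_{\langle H_{B}(M)\rangle})=MT(H_{B}(M))$, which is the remaining ``in particular'' assertion.

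The main obstacle is the motivatedness bookkeeping in steps (i)--(iii): one must rerun Deligne's deformation-to-CM argument and verify at every stage that the classes it produces are not merely absolutely Hodge but lie in Andr\'e's space of motivated cycles --- that the flat section carrying the class really is Hodge at the special fibre, and that each transport between fibres is through motivated correspondences. This is exactly where the Lefschetz standard conjecture for abelian varieties and Andr\'e's realization of the K\"unneth and Lefschetz operators as motivated cycles are indispensable, and the motivated Principle B is the technical crux that organizes all of it.
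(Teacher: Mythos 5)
Your outline is correct, but note that the paper does not prove this statement at all: it is quoted directly from Andr\'e's work \cite{AndreIHES}, and what you have written is essentially a faithful reconstruction of Andr\'e's own argument there (Deligne's ``Hodge classes on abelian varieties are absolutely Hodge'' scheme --- reduction to CM type via density of special points, divisor and split Weil classes, deformation --- upgraded to ``motivated'' by means of Andr\'e's deformation principle playing the role of Principle~B, plus the standard Tannakian deduction of $G_{\mathrm{And}}(M)\cong MT(M)$ from full faithfulness and semisimplicity). So the proposal matches the approach of the cited source rather than offering a different route.
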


\begin{lem} \label{sum}
Let $M$ and $N$ be motives in $\mathcal{M}_{\overline{\mathbb{Q}}, 1}^{\mathrm{And}, \otimes}$.
Assume $\mathrm{tr.deg}_{\mathbb{Q}}\overline{\mathbb{Q}}(P(M \oplus N)) 
= \mathrm{tr.deg}_{\mathbb{Q}}\overline{\mathbb{Q}}(P(M))$.
If the MGPC holds for $M \oplus N$, then it also holds for $M$.
\end{lem}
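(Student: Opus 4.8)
The plan is to verify the two conditions defining the MGPC for $M$ in Conjecture \ref{MGPC}: connectedness of $\Omega^{\mathrm{And}}_{M}$, and the equality $\mathrm{tr.deg}_{\mathbb{Q}}\overline{\mathbb{Q}}(P(M)) = \dim G_{\mathrm{And}}(M)$. The inputs are that the MGPC holds for $M\oplus N$ — so $\Omega^{\mathrm{And}}_{M\oplus N}$ is connected and $\mathrm{tr.deg}_{\mathbb{Q}}\overline{\mathbb{Q}}(P(M\oplus N)) = \dim G_{\mathrm{And}}(M\oplus N)$ — together with the hypothesis $\mathrm{tr.deg}_{\mathbb{Q}}\overline{\mathbb{Q}}(P(M\oplus N)) = \mathrm{tr.deg}_{\mathbb{Q}}\overline{\mathbb{Q}}(P(M))$.

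The technical point I would rely on is that the inclusion of Tannakian subcategories $\langle M\rangle \subseteq \langle M\oplus N\rangle$ — with $\langle M\rangle$ a full subcategory stable under subquotients — induces on the Tannakian fundamental groups attached to the fiber functor $H_{B}$ a faithfully flat homomorphism of affine $\mathbb{Q}$-group schemes $G_{\mathrm{And}}(M\oplus N) \twoheadrightarrow G_{\mathrm{And}}(M)$, compatibly with the torsor structures, so that the restriction map $\Omega^{\mathrm{And}}_{M\oplus N} \to \Omega^{\mathrm{And}}_{M}$ is faithfully flat and sends $\omega_{M\oplus N}$ to $\omega_{M}$ (cf. \cite{DM}). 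Surjectivity of the group map gives $\dim G_{\mathrm{And}}(M) \leq \dim G_{\mathrm{And}}(M\oplus N)$, and connectedness of $\Omega^{\mathrm{And}}_{M\oplus N}$ (from the MGPC for $M\oplus N$) forces its image $\Omega^{\mathrm{And}}_{M}$ to be connected. Alternatively, connectedness of $\Omega^{\mathrm{And}}_{M}$ is automatic from the hypothesis $M\in\mathcal{M}_{\overline{\mathbb{Q}},1}^{\mathrm{And},\otimes}$: by Theorem \ref{mg = mt}, $G_{\mathrm{And}}(M)$ is the Mumford--Tate group of the Hodge realization of $M$, hence connected, and a torsor under a connected $\mathbb{Q}$-group scheme that is trivialized over $\mathbb{C}$ by the period point $\omega_{M}$ is connected.

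For the dimension equality I would then simply chain the inequalities: Proposition \ref{inequality} for $M$, the inequality $\dim G_{\mathrm{And}}(M) \leq \dim G_{\mathrm{And}}(M\oplus N)$ just established, the MGPC for $M\oplus N$, and the transcendence-degree hypothesis give
\[ \mathrm{tr.deg}_{\mathbb{Q}}\overline{\mathbb{Q}}(P(M)) \leq \dim G_{\mathrm{And}}(M) \leq \dim G_{\mathrm{And}}(M\oplus N) = \mathrm{tr.deg}_{\mathbb{Q}}\overline{\mathbb{Q}}(P(M\oplus N)) = \mathrm{tr.deg}_{\mathbb{Q}}\overline{\mathbb{Q}}(P(M)), \]
so all terms agree and in particular $\mathrm{tr.deg}_{\mathbb{Q}}\overline{\mathbb{Q}}(P(M)) = \dim G_{\mathrm{And}}(M)$, completing the verification of the MGPC for $M$. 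I do not expect a genuine obstacle here; the only step requiring care is the faithful flatness of $G_{\mathrm{And}}(M\oplus N) \to G_{\mathrm{And}}(M)$, which is precisely the standard Tannakian criterion applied to the full subcategory $\langle M\rangle$, closed under subquotients by construction.
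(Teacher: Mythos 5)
Your proof is correct, and the skeleton -- chaining $\mathrm{tr.deg}_{\mathbb{Q}}\overline{\mathbb{Q}}(P(M)) \leq \dim G_{\mathrm{And}}(M) \leq \dim G_{\mathrm{And}}(M\oplus N) = \mathrm{tr.deg}_{\mathbb{Q}}\overline{\mathbb{Q}}(P(M\oplus N)) = \mathrm{tr.deg}_{\mathbb{Q}}\overline{\mathbb{Q}}(P(M))$ -- is exactly the paper's. Where you diverge is in how the middle inequality is justified: the paper first invokes Theorem \ref{mg = mt} to identify the motivated Galois groups of $M$ and $M\oplus N$ with their Mumford--Tate groups (this is where the hypothesis $M,N\in\mathcal{M}_{\overline{\mathbb{Q}},1}^{\mathrm{And},\otimes}$ is used), and then gets $\dim MT(M)\leq\dim MT(M\oplus N)$ from the surjectivity of the projection in Proposition \ref{property of mt}; you instead use the general Tannakian criterion that the full subcategory $\langle M\rangle\subseteq\langle M\oplus N\rangle$, closed under subquotients, induces a faithfully flat surjection $G_{\mathrm{And}}(M\oplus N)\twoheadrightarrow G_{\mathrm{And}}(M)$. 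Your route buys generality: it proves the lemma for arbitrary Andr\'e motives, with the 1-motive hypothesis playing no essential role (you only use it in your optional second argument for connectedness, and your first argument -- the image of the connected torsor $\Omega^{\mathrm{And}}_{M\oplus N}$ under the surjective restriction map is $\Omega^{\mathrm{And}}_{M}$ -- avoids it too); you also treat connectedness explicitly, which the paper leaves implicit, relying on the fact that via Theorem \ref{mg = mt} the group is a Mumford--Tate group and hence connected. The paper's route buys economy: it stays entirely within the tools already set up (Proposition \ref{property of mt}, Theorem \ref{mg = mt}) and never has to invoke the Tannakian faithful-flatness criterion.
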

\begin{proof}
Since $M$, $N \in \mathcal{M}_{\overline{\mathbb{Q}}, 1}^{\mathrm{And}, \otimes}$, their MG groups and MT groups coincide by Theorem \ref{mg = mt}, so $\Omega^{\mathrm{And}}_{M}$ is connected.
We have $\mathrm{dim} \ MT(M) \leq \mathrm{dim} \ MT(M \oplus N)$ by Proposition \ref{property of mt}.
Also, we have $\mathrm{tr.deg}_{\mathbb{Q}}\overline{\mathbb{Q}}(P(M)) \leq \mathrm{dim} \ MT(M)$ and 
$\mathrm{tr.deg}_{\mathbb{Q}}\overline{\mathbb{Q}}(P(M \oplus N)) \leq \mathrm{dim} \ MT(M \oplus N)$ by Proposition \ref{inequality}.
Also, we have $\mathrm{tr.deg}_{\mathbb{Q}}\overline{\mathbb{Q}}(P(M \oplus N)) = \mathrm{tr.deg}_{\mathbb{Q}}\overline{\mathbb{Q}}(P(M))$ by assumption.
Then, as the MGPC holds for $M \oplus N$ by assumption, we conclude that it also holds for $M$.
\end{proof}

\subsection{The power of an elliptic curve} 
The following is due to Chudnovsky:
\begin{thm} (\cite{Chudnovsky}) \label{CM}
For a CM elliptic curve $E$ over $\overline{\mathbb{Q}}$, the GPC holds for $h(E)$.
\end{thm}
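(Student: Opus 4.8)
The plan is to derive the statement from Chudnovsky's algebraic independence theorem together with the structural results recalled above. Unwinding Conjecture~\ref{Motivic GC}(2) via the reformulation in Conjecture~\ref{MGPC}, it suffices to prove: (a) motivated classes on the powers $E^{n}$ are algebraic; and (b) $\Omega^{\mathrm{And}}_{h(E)}$ is connected and $\mathrm{tr.deg}_{\mathbb{Q}} \overline{\mathbb{Q}}(P(h(E)))=\dim G_{\mathrm{And}}(h(E))$. Since $h^{1}(E)$ is a $1$-motive and $\mathbb{L},1\in\langle h^{1}(E)\rangle$, we have $\langle h(E)\rangle=\langle h^{1}(E)\rangle\subseteq\mathcal{M}_{\overline{\mathbb{Q}},1}^{\mathrm{And},\otimes}$, so Theorem~\ref{mg = mt} gives $G_{\mathrm{And}}(h(E))=MT(h^{1}(E))$. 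Because $E$ has complex multiplication by an imaginary quadratic field $K$, this common group is the $2$-dimensional maximal torus $T_{K}:=\mathrm{Res}_{K/\mathbb{Q}}\mathbb{G}_{m}$ of $GL(H^{1}(E))$ determined by $K\hookrightarrow\mathrm{End}(h^{1}(E))\otimes\mathbb{Q}$; in particular it is connected, so $\Omega^{\mathrm{And}}_{h(E)}$, being a torsor under $G_{\mathrm{And}}(h(E))$, is connected, and $\dim G_{\mathrm{And}}(h(E))=2$.

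For (a) I would argue as follows: a motivated class on $E^{n}$ is in particular a Hodge class, so it is enough to check the Hodge conjecture for $E^{n}$. Now $MT(h^{1}(E^{n}))=MT(h^{1}(E)^{\oplus n})=MT(h^{1}(E))=T_{K}$, a torus whose character group is freely generated by the characters $\chi,\bar\chi$ of the two embeddings of $K$, with $\chi\bar\chi$ the weight character (that of $\mathbb{L}$). Over $\overline{\mathbb{Q}}$ one has $H^{1}(E^{n})=V_{\chi}\oplus V_{\bar\chi}$ with $\dim V_{\chi}=\dim V_{\bar\chi}=n$, so weight bookkeeping shows that the $T_{K}$-invariants in $\bigwedge^{2p}H^{1}(E^{n})\otimes\mathbb{Q}(p)$ come only from the summand $\bigwedge^{p}V_{\chi}\otimes\bigwedge^{p}V_{\bar\chi}\otimes\mathbb{Q}(p)$, which is spanned by products of the degree-$2$ invariants in $V_{\chi}\otimes V_{\bar\chi}\otimes\mathbb{Q}(1)$. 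The degree-$2$ invariants are the Hodge classes of $H^{2}(E^{n})$, hence divisor classes by the Lefschetz $(1,1)$-theorem, hence algebraic, and so are their products; thus every Hodge class on $E^{n}$ is algebraic, proving (a).

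For (b), the inequality $\mathrm{tr.deg}_{\mathbb{Q}} \overline{\mathbb{Q}}(P(h(E)))\le\dim G_{\mathrm{And}}(h(E))=2$ is Proposition~\ref{inequality}, and connectedness was noted above. For the reverse inequality, by Lemma~\ref{times}(1) it suffices to bound $\mathrm{tr.deg}_{\mathbb{Q}} \overline{\mathbb{Q}}(P(h^{1}(E)))$ from below by $2$. Fixing a basis $\{\omega,\eta\}$ of $H^{1}_{DR}(E)$ with $\omega$ of the first kind and $\eta$ of the second kind, and a basis of $H_{1}(E,\mathbb{Q})$, the period matrix of $h^{1}(E)$ has entries the periods and quasi-periods $\omega_{1},\omega_{2},\eta_{1},\eta_{2}$; Legendre's relation $\omega_{1}\eta_{2}-\omega_{2}\eta_{1}\in 2\pi i\cdot\overline{\mathbb{Q}}^{\times}$ puts $\pi$ into $\overline{\mathbb{Q}}(P(h^{1}(E)))$. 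Chudnovsky's theorem \cite{Chudnovsky} implies that $\omega_{1}$ and $\pi$ are algebraically independent over $\overline{\mathbb{Q}}$, whence $\mathrm{tr.deg}_{\mathbb{Q}} \overline{\mathbb{Q}}(P(h^{1}(E)))\ge 2$. Combining this with (a) and the first paragraph yields the GPC for $h(E)$.

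The one genuinely hard input is Chudnovsky's transcendence theorem itself, which the plan uses as a black box; once it is available the translation above is essentially bookkeeping. Within that bookkeeping, the step deserving the most care is (a): the algebraicity of Hodge classes on $E^{n}$ is special to powers of a \emph{single} CM elliptic curve (it can fail for general CM abelian varieties) and rests on $MT(h^{1}(E))$ being as small as possible, namely the maximal torus $T_{K}$, together with André's identification $G_{\mathrm{And}}=MT$ from Theorem~\ref{mg = mt}; the remaining inputs (Proposition~\ref{inequality}, Lemma~\ref{times}, Legendre's relation, and the connectedness of Mumford--Tate groups) are then assembled as above.
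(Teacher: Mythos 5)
The paper does not prove this statement at all: Theorem~\ref{CM} is imported as a black box, with the translation of Chudnovsky's transcendence result into the GPC language implicitly delegated to \cite{Chudnovsky} and \cite[Ch.~IX]{AndreG}. Your write-up reconstructs exactly that standard derivation, and it is essentially correct: $\langle h(E)\rangle=\langle h_{1}(E)\rangle$ lies in $\mathcal{M}_{\overline{\mathbb{Q}},1}^{\mathrm{And},\otimes}$, so Theorem~\ref{mg = mt} identifies $G_{\mathrm{And}}(h(E))$ with $MT(h_{1}(E))=\mathrm{Res}_{K/\mathbb{Q}}\mathbb{G}_{m}$, which is connected of dimension $2$; the character bookkeeping for the torus shows Hodge classes on $E^{n}$ are polynomials in divisor classes (this is the Tate--Murasaki statement the paper itself invokes later for $E^{2}$, and your invariant-theoretic argument is a correct direct proof of it, including the descent from $\overline{\mathbb{Q}}$-span to $\mathbb{Q}$-span since the span of algebraic classes is a $\mathbb{Q}$-subspace); and the lower bound $\mathrm{tr.deg}_{\mathbb{Q}}\overline{\mathbb{Q}}(P(h(E)))\geq 2$ follows from Chudnovsky together with Proposition~\ref{inequality} for the matching upper bound. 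Two small points of care: the cleanest citation of Chudnovsky is that $\omega_{1}$ and $\eta_{1}$ are algebraically independent for any elliptic curve with algebraic invariants, which already gives the lower bound $2$ without invoking the Legendre relation; the CM-specific form you quote ($\pi$ and $\omega_{1}$ algebraically independent) is also a consequence of his work and works equally well. Also, connectedness of $\Omega^{\mathrm{And}}_{h(E)}$ does follow as you say, because a torsor under a connected group becomes isomorphic to the group over $\overline{\mathbb{Q}}$ and connectedness descends. So your proposal fills a gap the paper leaves to the literature rather than diverging from an argument the paper actually gives.
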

Let us immediately clarify to the reader that we can not prove the GPC for non-CM elliptic curves.
Instead, in this paper, we consider:
\begin{center}
``Does Chudnovsky's result imply the GPC for other cases?''
\end{center}
More precisely, for non-pairwise isogenous CM elliptic curves $E$, $E'$ and a positive integer $n$, we have natural questions:
\begin{enumerate}
\item Does the GPC hold for $h(E^{n})$?
\item Does the GPC hold for $M \in \langle h(E) \rangle$?
\item Does the GPC hold for $M \in \langle h(E), h(E') \rangle$?
\end{enumerate}
Unfortunately, we answer only (1) and (2).
Theorem \ref{main} is our answer to (2).
To answer (1), we prove that the GPC is stable under taking the power\footnote{This paper was submitted to J.~Reine~Angew.~Math. (Crelle), but was rejected. The editor, Prof.~Daniel Huybrechts passed the author on the comments of an anonymous referee of Crelle. The referee suggested and proved Prop.~\ref{power}.}:

\begin{prop} \label{power}
Let $n \geq 1$ be an integer, $X$ a smooth projective variety over $\overline{\mathbb{Q}}$, 
and $X^{n}$ its $n$-th power.
If the GPC holds for $h(X)$, then it also holds for $h(X^{n})$.
\end{prop}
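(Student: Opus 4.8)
The plan is to reduce everything to a single structural fact: that $h(X^{n})$ and $h(X)$ generate the same Tannakian subcategory of $\mathcal{M}_{\overline{\mathbb{Q}}}^{\mathrm{And}}$, i.e. $\langle h(X^{n})\rangle = \langle h(X)\rangle$. Once this is established, both ingredients of the GPC for $h(X^{n})$ follow almost formally. For the period/torsor part, the objects $\Omega_{X^{n}}^{\mathrm{And}}$, $\omega_{X^{n}}$, $Z_{X^{n}}$ are defined purely in terms of the restriction of $H_{DR}$, $H_{B}$ and the period isomorphism $\omega$ to the generated subcategory, so if that subcategory is literally the same for $h(X^{n})$ and $h(X)$, then $\Omega_{X^{n}}^{\mathrm{And}} = \Omega_{X}^{\mathrm{And}}$, $\omega_{X^{n}} = \omega_{X}$, and hence $Z_{X^{n}} = Z_{X}$; thus $Z_{X^{n}}\subseteq\Omega_{X^{n}}^{\mathrm{And}}$ is an equality iff $Z_{X}\subseteq\Omega_{X}^{\mathrm{And}}$ is, which holds by hypothesis. (Equivalently one may quote Lemma~\ref{trivial}, noting that $\mathrm{tr.deg}_{\mathbb{Q}}\overline{\mathbb{Q}}(P(h(X^{n}))) = \dim Z_{X^{n}} = \dim Z_{X} = \mathrm{tr.deg}_{\mathbb{Q}}\overline{\mathbb{Q}}(P(h(X)))$ by the same observation, or by the evident extension of Lemma~\ref{times}(2) to general powers and arbitrary dimension.) For the algebraicity part, any power of $X^{n}$ has the form $(X^{n})^{m}=X^{nm}$, a power of $X$, so every motivated class on a power of $X^{n}$ is a motivated class on a power of $X$, and is therefore algebraic by the GPC for $h(X)$. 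Combining the two parts gives the GPC for $h(X^{n})$.

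So the work is concentrated in proving $\langle h(X^{n})\rangle = \langle h(X)\rangle$. For the inclusion $\langle h(X^{n})\rangle\subseteq\langle h(X)\rangle$, I would invoke the Künneth isomorphism: in André's category the Künneth projectors are motivated, the realizations and the period isomorphism are compatible with them, and $h$ is a tensor functor, so $h(X^{n})\cong h(X)^{\otimes n}$; since $\langle h(X)\rangle$ is stable under tensor products, $h(X^{n})\in\langle h(X)\rangle$. For the reverse inclusion, I would use that $X$, being a nonempty smooth projective variety over the algebraically closed field $\overline{\mathbb{Q}}$, has a $\overline{\mathbb{Q}}$-rational point $x_{0}$ (the case $X=\varnothing$ being trivial). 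The closed immersion $\iota\colon X\hookrightarrow X^{n}$, $x\mapsto(x,x_{0},\dots,x_{0})$, and the first projection $\pi\colon X^{n}\to X$ satisfy $\pi\circ\iota=\mathrm{id}_{X}$, so applying $h$ yields morphisms in $\mathcal{M}_{\overline{\mathbb{Q}}}^{\mathrm{And}}$ whose composite is $\mathrm{id}_{h(X)}$; hence $h(X)$ is a direct summand of $h(X^{n})$. Since $\langle h(X^{n})\rangle$ is abelian and closed under subquotients, it contains $h(X)$, giving $\langle h(X)\rangle\subseteq\langle h(X^{n})\rangle$. Together the two inclusions give the desired equality of subcategories.

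The only genuinely substantive point is the identity $\langle h(X^{n})\rangle=\langle h(X)\rangle$, and within it the nontrivial direction is $\langle h(X)\rangle\subseteq\langle h(X^{n})\rangle$: one needs to extract $h(X)$ back from $h(X^{n})$, which is exactly where the existence of a rational point (splitting off $h(X)$ as a direct summand) enters; the opposite inclusion is immediate from the Künneth decomposition, and once the category equality is in hand the remaining verifications — equality of torsors, of the canonical points, and of Zariski closures, plus the reduction of motivated classes on powers of $X^{n}$ to motivated classes on powers of $X$ — are purely formal.
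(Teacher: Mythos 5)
Your proposal is correct and follows essentially the same route as the paper: establish $\langle h(X^{n})\rangle\cong\langle h(X)\rangle$ via the K\"unneth isomorphism $h(X^{n})\cong h(X)^{\otimes n}$ together with $h(X)$ being a direct summand of $h(X^{n})$, match the transcendence degrees (Lemma~\ref{times}(2)), conclude the MGPC by Lemma~\ref{trivial}, and note that powers of $X^{n}$ are powers of $X$ for the algebraicity part. You merely fill in details the paper leaves implicit (the rational-point splitting for the direct summand, and the extension of Lemma~\ref{times}(2) beyond the special case proved there), which is welcome but not a different argument.
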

\begin{proof}
By assumption, we see that the MGPC holds for $X$ and motivated classes on $X^{n}$ are algebraic. So it remains to prove that the MGPC holds for $X^{n}$.
Indeed, we have $\mathrm{tr.deg}_{\mathbb{Q}}\overline{\mathbb{Q}}(P(h(X)^{\otimes n})) = 
\mathrm{tr.deg}_{\mathbb{Q}}\overline{\mathbb{Q}}(P(h(X)))$ by Lemma \ref{times} (2).
Also, we have $\langle h(X^{n}) \rangle \cong \langle h(X) \rangle$ (using the fact that $h(X^{n}) \cong h(X)^{\otimes n}$
by definition of tensor product of motives, and that $h(X^{n})$ contains $h(X)$ as a direct summand.)
Then, as $G_{\mathrm{And}}(X)$ is connected by assumption, we see that the MGPC holds for $X^{n}$ by Remark \ref{trivial}. Thus the GPC holds for $X^{n}$.
\end{proof}

\begin{rem} 
Naturally, it is expected that the GPC is stable under other operations, e.g. direct summand and tensor product.
However, we could not prove it in general. 
\end{rem}

Now, we answer the question (1) which is already known (cf. \cite[Ch.~IX]{AndreG}) $:$
\begin{prop} \label{power of CM}
Let $n \geq 1$ be an integer and $E$ a CM elliptic curve over $\overline{\mathbb{Q}}$.
Then the GPC holds for $h(E^{n})$.
\end{prop}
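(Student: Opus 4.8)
The plan is to deduce this statement immediately from the two results just established, taking $X = E$. By Theorem \ref{CM} (Chudnovsky), the GPC holds for $h(E)$; in particular the equality $Z_{h(E)} = \Omega^{\mathrm{And}}_{h(E)}$ holds and the motivated classes on the powers $E^{m}$ are algebraic. By Proposition \ref{power}, the GPC is stable under passing to powers, so it also holds for $h(E^{n})$. In other words, Proposition \ref{power of CM} is just the instance $X = E$ of Proposition \ref{power} combined with Theorem \ref{CM}.

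To make explicit the (routine) verification packaged inside Proposition \ref{power}: since $h(E^{n}) \cong h(E)^{\otimes n}$ and $h(E)$ occurs as a direct summand of $h(E^{n})$, one has $\langle h(E^{n}) \rangle \cong \langle h(E) \rangle$, hence $G_{\mathrm{And}}(h(E^{n})) \cong G_{\mathrm{And}}(h(E))$; and by Lemma \ref{times} (2), $\mathrm{tr.deg}_{\mathbb{Q}}\overline{\mathbb{Q}}(P(h(E^{n}))) = \mathrm{tr.deg}_{\mathbb{Q}}\overline{\mathbb{Q}}(P(h(E)))$. Thus the equality of the transcendence degree with $\dim G_{\mathrm{And}}$, and the connectedness of the torsor of periods, propagate from $h(E)$ to $h(E^{n})$, giving the MGPC for $h(E^{n})$ by Lemma \ref{trivial}. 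Finally, the powers of $E^{n}$ are themselves powers of $E$, so the motivated classes on them are algebraic by Theorem \ref{CM}, which upgrades the MGPC to the GPC for $h(E^{n})$.

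There is essentially no obstacle here: the entire content sits in Chudnovsky's theorem and in Proposition \ref{power}, which is why the statement was recorded as already known (cf.\ \cite[Ch.~IX]{AndreG}). The only point worth double-checking is that the algebraicity clause of Conjecture \ref{Motivic GC} (2) for $X = E^{n}$ — i.e.\ on all powers $(E^{n})^{m} = E^{nm}$ — is indeed covered; it is, since this is part of the assertion that the GPC holds for $h(E)$. Accordingly, I would present the proof of Proposition \ref{power of CM} as a two-line corollary of Theorem \ref{CM} and Proposition \ref{power}, with at most a brief reminder of the identification $\langle h(E^{n}) \rangle \cong \langle h(E) \rangle$ and Lemma \ref{times} (2).
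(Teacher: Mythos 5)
Your proposal is correct and follows exactly the paper's own argument: the paper likewise proves Proposition~\ref{power of CM} as an immediate consequence of Theorem~\ref{CM} and Proposition~\ref{power}, with the routine verifications (the identification $\langle h(E^{n}) \rangle \cong \langle h(E) \rangle$ and Lemma~\ref{times}~(2)) already packaged inside Proposition~\ref{power}. No issues.
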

\begin{proof}
It follows from Theorem \ref{CM} and Proposition \ref{power}.
\end{proof}

\begin{rem}
As far as we know,\footnote{Huber and W\"ustholz proved the KZ period conjecture for ``1-motives'' \cite{HW}.
Roughly speaking, their result ``corresponds'' to the GPC for linear relations of periods of 1-motives.} the GPC is unknown for the product of two non-isogenous CM elliptic curves.
For more details, see \cite[Rem.~3 (c)]{Kah}, \cite[Rem.~5.15]{KSV}.
The weak version of the GPC\footnote{This is called the ``de Rham-Betti conjecture'' and is weaker than the MGPC.
More precisely, for a smooth projective variety $X$ over $\overline{\mathbb{Q}}$, there is a chain of inclusions 
$Z_{X} \subseteq \Omega_{X} \subseteq \Omega_{X}^{\mathrm{dRB}} \subseteq \Omega^{\mathrm{And}}_{X} \subseteq \Omega^{\mathrm{mot}}_{X}$. We say that $X$ satisfies the de Rham-Betti conjecture if $\Omega_{X}^{\mathrm{dRB}} \cong \Omega^{\mathrm{And}}_{X}$.}
is proved for products of ``elliptic curves'' (\cite{sv}, \cite{Kah}\footnote{Kahn proved the fullness conjectures for products of elliptic curves.
His result contains the Hodge, Tate, and dRB conjectures for them.
His proof is ``more uniform'' than the previous proofs in known cases.}, \cite{KSV}\footnote{Kreutz-Shen-Vial proved 
$\Omega_{X} \cong \Omega^{\mathrm{mot}}_{X}$ for $X$ a product of non-CM elliptic curves (\cite[Thm.~5.13 (ii)]{KSV}) and also proved many results around the de Rham-Betti conjecture with coefficients.}).
\end{rem}

For the reader's convenience, we recall the following elementary facts:
\begin{prop} \label{elliptic}
Let $E$ be an elliptic curve over $\overline{\mathbb{Q}}$ and let $n \in \mathbb{Z}_{\geq 1}$.
\begin{enumerate}
\item $h(E^n) \cong \oplus_{i}\wedge^{i}(h_{1}(E)^{\oplus n})$.
\item $\langle h(E^n) \rangle \cong \langle h(E) \rangle$.
\item $\overline{\mathbb{Q}}(P(h(E^n))) = \overline{\mathbb{Q}}(P(h(E)))$.
\end{enumerate}
\end{prop}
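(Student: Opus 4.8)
The plan is to prove (1) and then deduce (2) and (3) from it together with the identity $h(E^{n}) \cong h(E)^{\otimes n}$ (by definition of the tensor product of motives). For (1), I would start from the K\"unneth decomposition $h(E) \cong h_{0}(E) \oplus h_{1}(E) \oplus h_{2}(E)$, which is available in $\mathcal{M}_{\overline{\mathbb{Q}}}^{\mathrm{And}}$ (it holds already for the Chow motive of a curve, the projectors being algebraic), and identify the summands with exterior powers of $h_{1}(E)$: one has $h_{0}(E) = 1 = \wedge^{0} h_{1}(E)$, $h_{1}(E) = \wedge^{1} h_{1}(E)$, and $h_{2}(E) = \mathbb{L} \cong \wedge^{2} h_{1}(E)$, the last isomorphism identifying the determinant of $h_{1}(E)$ with the Lefschetz motive via Poincar\'e duality (the cup product $H^{1} \otimes H^{1} \to H^{2} = \mathbb{Q}(-1)$ is a perfect alternating pairing). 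Since $h_{1}(E)$ has rank $2$, the higher exterior powers vanish, so $h(E) \cong \bigoplus_{i=0}^{2} \wedge^{i} h_{1}(E)$. Combining this with $h(E^{n}) \cong h(E)^{\otimes n}$ and the binomial identity for exterior powers, $\bigoplus_{k} \wedge^{k}(V \oplus W) \cong \bigl( \bigoplus_{i} \wedge^{i} V \bigr) \otimes \bigl( \bigoplus_{j} \wedge^{j} W \bigr)$, valid for objects of the $\mathbb{Q}$-linear symmetric monoidal category $\mathcal{M}_{\overline{\mathbb{Q}}}^{\mathrm{And}}$, an induction on $n$ yields $h(E^{n}) \cong \bigoplus_{i=0}^{2n} \wedge^{i}\bigl( h_{1}(E)^{\oplus n} \bigr)$. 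Alternatively, one may simply invoke the multiplicative Chow--K\"unneth decomposition $h(A) \cong \bigoplus_{i} \wedge^{i} h_{1}(A)$ for an abelian variety $A$ (Shermenev, Deninger--Murre, K\"unnemann) applied to $A = E^{n}$, together with $h_{1}(E^{n}) \cong h_{1}(E)^{\oplus n}$.

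For (2), the inclusion $\langle h(E^{n}) \rangle \subseteq \langle h(E) \rangle$ is immediate from $h(E^{n}) \cong h(E)^{\otimes n}$. For the reverse inclusion, I would observe that $h(E)$ is a direct summand of $h(E)^{\otimes n} = h(E^{n})$: it is split off by the idempotent $\mathrm{id}_{h(E)} \otimes e \otimes \cdots \otimes e$, where $e \colon h(E) \twoheadrightarrow 1 \hookrightarrow h(E)$ is the projector onto $h_{0}(E)$. Hence $h(E) \in \langle h(E^{n}) \rangle$, so $\langle h(E) \rangle \subseteq \langle h(E^{n}) \rangle$; in fact both subcategories coincide with $\langle h_{1}(E) \rangle$.

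For (3), recall first that $\overline{\mathbb{Q}}(P(M))$ does not depend on the chosen bases of $H_{DR}(M)$ and $H_{B}(M)$, a change of basis multiplying the period matrix on the left and on the right by matrices over $\overline{\mathbb{Q}}$. For the inclusion ``$\subseteq$'', take bases on $h(E)$ and their $n$-fold tensor products as bases on $h(E^{n}) = h(E)^{\otimes n}$; since the realizations are monoidal, each entry of $\omega_{h(E^{n})}$ is then a product of $n$ entries of $\omega_{h(E)}$, whence $\overline{\mathbb{Q}}(P(h(E^{n}))) \subseteq \overline{\mathbb{Q}}(P(h(E)))$ (this is the mechanism behind Lemma \ref{times} (2)). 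For ``$\supseteq$'', writing $h(E^{n}) \cong h(E) \oplus N$ as in (2) and applying Lemma \ref{times} (1) gives $\overline{\mathbb{Q}}(P(h(E^{n}))) = \overline{\mathbb{Q}}(P(h(E)), P(N)) \supseteq \overline{\mathbb{Q}}(P(h(E)))$. Combining the two inclusions yields the equality.

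The only step that is more than formal bookkeeping is (1) — establishing $h(E) \cong \bigoplus_{i} \wedge^{i} h_{1}(E)$ and propagating it through tensor powers, equivalently invoking the multiplicative Chow--K\"unneth decomposition for $E^{n}$. Once (1) is granted, parts (2) and (3) are routine manipulations with Tannakian generators and with period matrices of direct sums and tensor products, the relevant facts about the latter having been isolated already in Lemma \ref{times}.
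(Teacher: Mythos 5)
Your proposal is correct and follows essentially the same route as the paper: the substantive point, part (1), is obtained (in your ``alternative'' formulation, which is exactly the paper's) from the motivic decomposition $h(A)\cong\bigoplus_{i}\wedge^{i}h_{1}(A)$ for abelian varieties together with $h_{1}(E^{n})\cong h_{1}(E)^{\oplus n}$ and $\wedge^{2}h_{1}(E)\cong\mathbb{L}$, and (2), (3) are then formal. The only cosmetic difference is that the paper reads (2) and (3) directly off the exterior-power decomposition, reducing everything to $1\oplus h_{1}(E)\oplus\mathbb{L}$, whereas you route them through $h(E^{n})\cong h(E)^{\otimes n}$, a split summand $h(E)\subset h(E)^{\otimes n}$, and Lemma \ref{times}; both arguments are routine and valid.
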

\begin{proof}
$(1)$
Since $E^{n}$ is an abelian variety, $h(E^n) \cong \oplus_{i}\wedge^{i}(h_{1}(E^{n}))$.
By K\"unneth formula, $h_{1}(E^{n}) = h_{1}(E)^{\oplus n}$. Note that $\wedge^{2}h_{1}(E)  = \mathbb{L}$. Thus we get $(1)$.\\
\indent $(2)$ By (1),  $\langle h(E^n) \rangle \cong \langle \oplus_{i}\wedge^{i}(h_{1}(E)^{\oplus n}) \rangle 
\cong \langle 1 \oplus h_{1}(E) \oplus \mathbb{L} \rangle \cong \langle h(E) \rangle$.\\
\indent $(3)$ By (1), 
$\overline{\mathbb{Q}}(P(h(E^n))) = \overline{\mathbb{Q}}(P(\oplus_{i}\wedge^{i}(h_{1}(E)^{\oplus n}))
= \overline{\mathbb{Q}}(P(h_{1}(E) \oplus \mathbb{L})) = \overline{\mathbb{Q}}(P(h(E)))$.\\
\end{proof}

For a surface $S$, let $h(S) = \oplus_{i}^{4}h_{i}(S)$ denote the CK-decomposition as in \cite{Murre}.\\
Moreover, let $t_{2}(S)$ be the transcendental part of $h_{2}(S)$,
i.e. the orthogonal complement of $\mathbb{L}^{\oplus \rho(S)}$ in $h_{2}(S)$ as in \cite{KMP}.\\
\indent The following result plays a key role in the proof of Theorem \ref{main}$:$

\begin{prop} \label{t2}
Let $E$ be an elliptic curve $\overline{\mathbb{Q}}$.
\begin{enumerate}
\item $\mathrm{tr.deg}_{\mathbb{Q}}\overline{\mathbb{Q}}(P(t_{2}(E^{2}) \oplus \mathbb{L}))
= \mathrm{tr. deg}_{\mathbb{Q}}\overline{\mathbb{Q}}(P(h(E^{2})))$
\item If $E$ has CM, then the GPC holds for $t_{2}(E^{2}) \oplus \mathbb{L}$.
\end{enumerate}
\end{prop}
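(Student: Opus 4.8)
The plan is to obtain (2) by applying Lemma~\ref{sum} to a direct-sum decomposition of $h(E^{2})$, using Proposition~\ref{power of CM} for the GPC of $h(E^{2})$; the transcendence-degree hypothesis of that lemma is exactly statement (1), which I would prove by comparing $t_{2}(E^{2})$ with a symmetric square of $h_{1}(E)$.

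\emph{Proof of (1).} As in the proof of Proposition~\ref{elliptic}, $h_{2}(E^{2})\cong\wedge^{2}(h_{1}(E)^{\oplus 2})$; expanding with $h_{1}(E)^{\otimes 2}\cong\mathrm{Sym}^{2}h_{1}(E)\oplus\wedge^{2}h_{1}(E)$ and $\wedge^{2}h_{1}(E)\cong\mathbb{L}$ gives $h_{2}(E^{2})\cong\mathrm{Sym}^{2}h_{1}(E)\oplus\mathbb{L}^{\oplus 3}$. Comparing this with the defining decomposition $h_{2}(E^{2})\cong t_{2}(E^{2})\oplus\mathbb{L}^{\oplus\rho(E^{2})}$ and using semisimplicity of $\mathcal{M}^{\mathrm{And}}_{\overline{\mathbb{Q}}}$ together with the fact that $t_{2}(E^{2})$ has no direct summand isomorphic to $\mathbb{L}$, we get $\mathrm{Sym}^{2}h_{1}(E)\cong t_{2}(E^{2})\oplus\mathbb{L}^{\oplus j}$ for some $j\geq 0$; in particular $t_{2}(E^{2})\oplus\mathbb{L}$ is a direct summand of $\mathrm{Sym}^{2}h_{1}(E)\oplus\mathbb{L}\cong h_{1}(E)^{\otimes 2}$. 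Since periods of a direct summand lie in the period field of the ambient motive, Lemma~\ref{times}~(2) yields $\mathrm{tr.deg}_{\mathbb{Q}}\overline{\mathbb{Q}}(P(t_{2}(E^{2})\oplus\mathbb{L}))\leq\mathrm{tr.deg}_{\mathbb{Q}}\overline{\mathbb{Q}}(P(h_{1}(E)^{\otimes 2}))=\mathrm{tr.deg}_{\mathbb{Q}}\overline{\mathbb{Q}}(P(h_{1}(E)))$. For the reverse inequality, fix a period matrix $(p_{ij})_{1\leq i,j\leq 2}$ of $h_{1}(E)$: the entries $p_{ij}^{2}$ appear among the periods of $\mathrm{Sym}^{2}h_{1}(E)$, hence lie in $\overline{\mathbb{Q}}(P(t_{2}(E^{2})\oplus\mathbb{L}))$ by the decomposition above, so $\overline{\mathbb{Q}}(P(h_{1}(E)))$ is algebraic over $\overline{\mathbb{Q}}(P(t_{2}(E^{2})\oplus\mathbb{L}))$. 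Finally, $\overline{\mathbb{Q}}(P(h(E^{2})))=\overline{\mathbb{Q}}(P(h(E)))=\overline{\mathbb{Q}}(P(h_{1}(E)),2\pi i)$ by Proposition~\ref{elliptic}~(3), and $2\pi i$ — the period of $\mathbb{L}\cong\wedge^{2}h_{1}(E)$, i.e.\ the determinant of $(p_{ij})$ up to $\overline{\mathbb{Q}}^{\times}$ — already lies in $\overline{\mathbb{Q}}(P(h_{1}(E)))$; hence $\mathrm{tr.deg}_{\mathbb{Q}}\overline{\mathbb{Q}}(P(t_{2}(E^{2})\oplus\mathbb{L}))=\mathrm{tr.deg}_{\mathbb{Q}}\overline{\mathbb{Q}}(P(h(E^{2})))$, which is (1).

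\emph{Proof of (2).} Assume $E$ has CM. Since $\rho(E^{2})\geq 1$, both $\mathbb{L}$ and $t_{2}(E^{2})$, and hence $t_{2}(E^{2})\oplus\mathbb{L}$, are direct summands of $h_{2}(E^{2})\subseteq h(E^{2})$; by semisimplicity write $h(E^{2})\cong(t_{2}(E^{2})\oplus\mathbb{L})\oplus N$. As $h(E^{2})$ is a $1$-motive, $t_{2}(E^{2})\oplus\mathbb{L}$ and $N$ both lie in $\mathcal{M}^{\mathrm{And},\otimes}_{\overline{\mathbb{Q}},1}$. By Proposition~\ref{power of CM} the GPC, in particular the MGPC, holds for $h(E^{2})$; and by (1), $\mathrm{tr.deg}_{\mathbb{Q}}\overline{\mathbb{Q}}(P(h(E^{2})))=\mathrm{tr.deg}_{\mathbb{Q}}\overline{\mathbb{Q}}(P(t_{2}(E^{2})\oplus\mathbb{L}))$. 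Hence Lemma~\ref{sum}, applied with $M=t_{2}(E^{2})\oplus\mathbb{L}$, gives the MGPC for $t_{2}(E^{2})\oplus\mathbb{L}$. Since $\langle t_{2}(E^{2})\oplus\mathbb{L}\rangle\subseteq\langle h(E^{2})\rangle$, the motivated classes occurring there are algebraic by Theorem~\ref{mg = mt} and Proposition~\ref{power of CM}. Therefore the GPC holds for $t_{2}(E^{2})\oplus\mathbb{L}$.

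\emph{The main obstacle.} The heart of the matter is (1): one must recognise the transcendental motive $t_{2}(E^{2})$ of the abelian surface $E^{2}$ as the transcendental summand of $\mathrm{Sym}^{2}h_{1}(E)$, and then observe that this symmetric square records the periods of $h_{1}(E)$ only through their pairwise products, so the transcendence degree is preserved — exactly the phenomenon encoded in Lemma~\ref{times}~(2). Once (1) is available, (2) is a formal application of Lemma~\ref{sum}, the substantive inputs being Chudnovsky's theorem (via Proposition~\ref{power of CM}) and Andr\'e's Theorem~\ref{mg = mt}, on which Lemma~\ref{sum} rests.
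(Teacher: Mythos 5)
Your proposal is correct and follows essentially the same route as the paper: part (1) via the two decompositions $h_{2}(E^{2})\cong \mathrm{Sym}^{2}h_{1}(E)\oplus\mathbb{L}^{\oplus 3}\cong t_{2}(E^{2})\oplus\mathbb{L}^{\oplus\rho}$ together with the ``squares of periods'' argument of Lemma~\ref{times}~(2), and part (2) via Lemma~\ref{sum} combined with Proposition~\ref{power of CM} and the algebraicity of motivated classes on (powers of) $E^{2}$, which the paper gets from the Hodge conjecture for $E^{2}$ and you get, equivalently, from the GPC for $h(E^{2})$ itself. Your extraction of $\mathrm{Sym}^{2}h_{1}(E)\cong t_{2}(E^{2})\oplus\mathbb{L}^{\oplus j}$ by semisimplicity is in fact slightly more careful than the paper's wording; only the phrase ``$h(E^{2})$ is a $1$-motive'' is a terminological slip (it lies in $\mathcal{M}^{\mathrm{And},\otimes}_{\overline{\mathbb{Q}},1}$ but is not itself a $1$-motive), which does not affect the argument.
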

\begin{proof}
(1) We consider two expressions for $h_{2}(E^{2})$.
One is 
\[ h_{2}(E^{2}) \cong \mathbb{L}^{\oplus 2} \oplus h_{1}(E)^{\otimes 2} 
\cong \mathbb{L}^{\oplus 3} \oplus \mathrm{Sym}^{2}(h_{1}(E)) \]
by K\"unneth formula.
The other is 
\[ h_{2}(E^{2}) \cong \mathbb{L}^{\oplus \rho(E^{2})} \oplus t_{2}(E^{2}) \ \ (\rho(E^{2}) = 3, 4) \]
by definition of $t_{2}$.
The expressions induce
\[ \overline{\mathbb{Q}}(P(t_{2}(E^{2}) \oplus \mathbb{L})) \cong 
\overline{\mathbb{Q}}(P(h_{1}(E)^{\otimes 2} \oplus \mathbb{L})). \]
Then we have $\mathrm{tr.deg}_{\mathbb{Q}}\overline{\mathbb{Q}}(P(t_{2}(E^{2}) \oplus \mathbb{L}))
= \mathrm{tr. deg}_{\mathbb{Q}}\overline{\mathbb{Q}}(P(h_{1}(E) \oplus \mathbb{L}))$ by Lemma \ref{times} (2), so get
$\mathrm{tr.deg}_{\mathbb{Q}}\overline{\mathbb{Q}}(P(t_{2}(E^{2}) \oplus \mathbb{L}))
= \mathrm{tr.deg}_{\mathbb{Q}}\overline{\mathbb{Q}}(P(h(E^{2})))$ by Proposition \ref{elliptic} (3).\\
\indent (2) We have a decomposition
$h(E^{2}) = t_{2}(E^{2}) \oplus \mathbb{L} \oplus M$
with $M = \oplus_{i \neq 2}h_{i}(E^{2}) \oplus \mathbb{L}^{\oplus(\rho(E^{2}) - 1)}$ by the CK-decomposition. 
Since $E$ has CM, the GPC holds for $h(E^{2})$ by Proposition \ref{power of CM}.
Since $\mathrm{tr.deg}_{\mathbb{Q}}\overline{\mathbb{Q}}(P(t_{2}(E^{2}) \oplus \mathbb{L}))
= \mathrm{tr. deg}_{\mathbb{Q}}\overline{\mathbb{Q}}(P(h(E^{2})))$ by (1), 
the MGPC holds for $t_{2}(E^{2}) \oplus \mathbb{L}$ by Lemma \ref{sum}.
Thus, it remains to show that motivated classes on $E^{2}$ are algebraic.
Indeed, this follows from the fact that motivated classes are Hodge and that 
the Hodge conjecture holds for $E^{2}$ by \cite{tate} or \cite{murasaki}.
Thus the GPC holds for $t_{2}(E^{2}) \oplus \mathbb{L}$.
\end{proof}

\subsection{Surfaces with $b_{1} = b_{3} = 0$}

Throughout this subsection, let $k = \overline{\mathbb{Q}}$.
\begin{lem} \label{surface b1}
Let $S$ be a surface with $b_{1} = b_{3} = 0$.
If the GPC holds for $t_{2}(S) \oplus \mathbb{L}$, then it also holds for $h(S)$.
\end{lem}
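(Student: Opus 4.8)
The plan is to reduce the statement to the equality of Tannakian subcategories $\langle h(S)\rangle = \langle t_{2}(S)\oplus\mathbb{L}\rangle$ together with an equality of period fields, and then feed this into Lemma \ref{trivial}. First I would unwind the Chow--K\"unneth decomposition of $S$. Since $b_{1}(S)=b_{3}(S)=0$, the odd components $h_{1}(S)$ and $h_{3}(S)$ vanish (their Betti realizations are $H^{1}(S)=0$ and $H^{3}(S)=0$, and the fiber functor $H_{B}$ on $\mathcal{M}^{\mathrm{And}}_{\overline{\mathbb{Q}}}$ is faithful and exact, hence conservative), so $h(S)\cong 1\oplus h_{2}(S)\oplus \mathbb{L}^{\otimes 2}$. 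Combining this with the splitting $h_{2}(S)\cong \mathbb{L}^{\oplus\rho(S)}\oplus t_{2}(S)$ of \cite{KMP} yields
\[ h(S)\;\cong\;1\oplus \mathbb{L}^{\oplus\rho(S)}\oplus t_{2}(S)\oplus \mathbb{L}^{\otimes 2}, \]
where moreover $\rho(S)\geq 1$ since $S$ carries an ample class.

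Next I would establish $\langle h(S)\rangle = \langle t_{2}(S)\oplus\mathbb{L}\rangle$ as strictly full Tannakian subcategories of $\mathcal{M}^{\mathrm{And}}_{\overline{\mathbb{Q}}}$. The inclusion $\supseteq$ is immediate because $t_{2}(S)$ and (using $\rho(S)\geq 1$) $\mathbb{L}$ are direct summands of $h(S)$; the inclusion $\subseteq$ holds because $1$ is the unit object, $\mathbb{L}^{\oplus\rho(S)}$ and $\mathbb{L}^{\otimes 2}$ lie in $\langle\mathbb{L}\rangle\subseteq\langle t_{2}(S)\oplus\mathbb{L}\rangle$, and $t_{2}(S)$ is a summand of $t_{2}(S)\oplus\mathbb{L}$. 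In particular $G_{\mathrm{And}}(h(S))\cong G_{\mathrm{And}}(t_{2}(S)\oplus\mathbb{L})$. For the periods, Lemma \ref{times} (1) applied to the displayed decomposition gives $\overline{\mathbb{Q}}(P(h(S)))=\overline{\mathbb{Q}}(P(1),P(\mathbb{L}^{\oplus\rho(S)}),P(t_{2}(S)),P(\mathbb{L}^{\otimes 2}))$; since the periods of $1$ lie in $\overline{\mathbb{Q}}$ and those of $\mathbb{L}^{\oplus\rho(S)}$ and $\mathbb{L}^{\otimes 2}$ lie in $\overline{\mathbb{Q}}(P(\mathbb{L}))=\overline{\mathbb{Q}}(2\pi i)$, this field equals $\overline{\mathbb{Q}}(P(t_{2}(S)),P(\mathbb{L}))=\overline{\mathbb{Q}}(P(t_{2}(S)\oplus\mathbb{L}))$. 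In particular the two motives have equal transcendence degree of periods.

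With these two inputs, Lemma \ref{trivial} immediately shows that the MGPC for $t_{2}(S)\oplus\mathbb{L}$ implies the MGPC for $h(S)$. It then remains to transfer the algebraicity of motivated classes: the assertion that motivated classes on all powers of $S$ are algebraic is equivalent to the assertion that every motivated class lying in the Tannakian category $\langle h(S)\rangle$ is algebraic, and since $\langle h(S)\rangle=\langle t_{2}(S)\oplus\mathbb{L}\rangle$ this is precisely the algebraicity statement already contained in the hypothesis that the GPC holds for $t_{2}(S)\oplus\mathbb{L}$; so it carries over verbatim, and the GPC holds for $h(S)$. I expect the only point needing a little care to be exactly this last bookkeeping (making precise that the "motivated classes" clause is insensitive to replacing $h(S)$ by the generator $t_{2}(S)\oplus\mathbb{L}$ of the same Tannakian category), together with the observation that $b_{1}=b_{3}=0$ forces $h_{1}(S)=h_{3}(S)=0$; the remainder is a routine application of the Chow--K\"unneth decomposition and of Lemmas \ref{times} and \ref{trivial}.
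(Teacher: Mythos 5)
Your proposal is correct and follows essentially the same route as the paper: the Chow--K\"unneth decomposition with $b_1=b_3=0$ gives $h(S)\cong 1\oplus t_2(S)\oplus\mathbb{L}^{\oplus\rho(S)}\oplus\mathbb{L}^{\otimes 2}$ with $\rho(S)\geq 1$, whence $\langle h(S)\rangle=\langle t_2(S)\oplus\mathbb{L}\rangle$ and equal period fields, and Lemma \ref{trivial} concludes. Your extra bookkeeping on transferring the algebraicity-of-motivated-classes clause is a point the paper leaves implicit, and it is handled correctly.
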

\begin{proof}
By $b_{1} = b_{3} = 0$ and the CK-decomposition, we have
$h(S) \cong 1 \oplus  t_{2}(S) \oplus \mathbb{L}^{\oplus \rho(S)} \oplus \mathbb{L}^{\otimes 2}$ with $\rho > 0$.
Thus we have $\langle h(S) \rangle \cong \langle t_{2}(S) \oplus \mathbb{L} \rangle$ and 
$\overline{\mathbb{Q}}(P(h(S))) = \overline{\mathbb{Q}}(P(t_{2}(S) \oplus \mathbb{L}))$. 
By assumption, the MGPC holds for $t_{2}(S) \oplus \mathbb{L}$, $G_{\mathrm{And}}(t_{2}(S) \oplus \mathbb{L})$ is connected, and motivated classes on powers of $S$ are algebraic.
Then we conclude that the GPC holds for $S$ by Remark \ref{trivial}.
\end{proof}
For example, any K3 surface satisfies $b_{1} = b_{3} = 0$.
The standard reference for K3 surfaces is \cite{HuybrechtsK3book}.
For the reader's understanding, we provide non-trivial geometric examples that satisfy the GPC.
To do this, we recall a motivic formulation of Bloch's conjecture (BC for short):
\begin{conj} (\cite{Bloch}) \label{bloch}
Let $S$ be a surface. If $p_{g}(S) = 0$, then $t_{2}(S) = 0$.
\end{conj}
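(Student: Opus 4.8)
The plan is to begin by observing that the hypothesis $p_{g}(S)=0$ is purely cohomological: it is equivalent to $H^{2,0}(S)=0$, so $H^{2}(S,\mathbb{Q})$ is a weight-two Hodge structure entirely of type $(1,1)$, hence (by the Lefschetz $(1,1)$-theorem) spanned by divisor classes; thus $\rho(S)=b_{2}(S)$ and the transcendental part $H^{2}_{\mathrm{tr}}(S)$ vanishes. Since the Betti realization of the Chow motive $t_{2}(S)$ is precisely $H^{2}_{\mathrm{tr}}(S)$, the motive $t_{2}(S)$ is homologically — a fortiori numerically — trivial. So Conjecture \ref{bloch} becomes the assertion that this numerically trivial Chow motive is actually the zero object.

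The route I would take is the Kimura--O'Sullivan one: it suffices to prove that $h(S)$, equivalently $t_{2}(S)$, is \emph{finite-dimensional} in the sense of Kimura and O'Sullivan. Indeed, on a finite-dimensional Chow motive the ideal of numerically trivial endomorphisms is nil, so if $t_{2}(S)$ is finite-dimensional then its identity endomorphism, being numerically trivial, is nilpotent, hence zero, hence $t_{2}(S)=0$. The remaining Chow--K\"unneth summands of $h(S)$ — namely $1$, $h_{1}(S)$, $h_{3}(S)$ and the Lefschetz pieces $\mathbb{L}^{\oplus\rho(S)}$, $\mathbb{L}^{\otimes 2}$ — are always finite-dimensional, with $h_{1}$ and $h_{3}$ finite-dimensional because they are cut out of motives of abelian varieties; so finite-dimensionality of $t_{2}(S)$ is the whole content. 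One tries to establish it by realizing $t_{2}(S)$ as a direct summand of the motive of a product of curves (or of an abelian variety) through the geometry of $S$: this is automatic when $\kappa(S)\le 1$, since by the Enriques--Kodaira classification $p_{g}=0$ then forces $S$ to be rational, ruled, Enriques, or a genus-one fibration over a curve, all of which are dominated by products of curves (this is the original Bloch--Kas--Lieberman case); and for the standard families of general-type surfaces with $p_{g}=0$ it is carried out via explicit correspondences coming from quotient constructions, bielliptic involutions, or elliptic fibrations (Godeaux, Barlow, Catanese, Inose--Mizukami surfaces, and the surfaces treated by Voisin).

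The main obstacle is exactly that finite-dimensionality of Chow motives of surfaces of general type is itself open: it follows from Kimura's conjecture, which in turn would follow from the standard conjectures together with the conjectural nilpotence of homologically trivial algebraic cycles, but unconditionally it is known only for surfaces built from curves and abelian varieties and for the case-by-case families above. Consequently Conjecture \ref{bloch} cannot be proved uniformly with present techniques; for the purposes of this paper it is therefore invoked only in the cases where finite-dimensionality — hence the conjecture — is a theorem, and combined with Lemma \ref{surface b1} this already yields the promised non-trivial geometric examples of surfaces with $b_{1}=b_{3}=0$ for which the GPC holds.
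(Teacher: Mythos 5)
The statement you were asked about is not a theorem of the paper but a conjecture: it is Bloch's conjecture in its motivic form, cited from \cite{Bloch}, and the paper offers no proof of it --- it only remarks that it is known for surfaces not of general type \cite{BKL} and that it is not used in the proof of the main theorem. You correctly recognized this, and your account of the state of the art is essentially right: the reduction of the $p_{g}=0$ case to Kimura--O'Sullivan finite-dimensionality (numerically trivial endomorphisms of a finite-dimensional motive are nilpotent, so the homologically trivial summand $t_{2}(S)$ must vanish) is the standard Guletskii--Pedrini equivalence, and finite-dimensionality is indeed open for general-type surfaces, which is exactly why the statement remains a conjecture. One small inaccuracy in your sketch: Bloch--Kas--Lieberman do not argue that every non-general-type surface with $p_{g}=0$ is dominated by a product of curves (Enriques and Dolgachev surfaces are not obviously so); they use the Enriques--Kodaira classification to prove representability of $\mathrm{CH}_{0}$ case by case. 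This does not affect your conclusion, which matches the paper's treatment of the conjecture.
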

It holds for surfaces not of general type \cite{BKL}.
We mention the relation between the GPC and the BC for surfaces, which is not used for our aim.

\begin{prop}
Let $S$ be a surface with $b_{1} = b_{3} = 0$.
If $t_{2}(S) = 0$, then the GPC holds for $S$.
In particular, if the BC holds for a surface $S$ with $p_{g} = q = 0$ (e.g. an Enriques surface), then the GPC holds for $S$.
\end{prop}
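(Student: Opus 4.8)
The plan is to reduce at once to the Lefschetz motive. Since $t_{2}(S) = 0$ by hypothesis, the motive $t_{2}(S) \oplus \mathbb{L}$ is simply $\mathbb{L}$, so by Lemma~\ref{surface b1} it suffices to establish the GPC for $\mathbb{L}$, i.e. the MGPC for $\mathbb{L}$ together with the algebraicity of motivated classes on the powers of $S$.

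Next I would verify the MGPC for $\mathbb{L}$ directly. The Tannakian subcategory $\langle \mathbb{L} \rangle$ coincides with $\langle h(\mathbb{P}^{1}) \rangle$, since $1$ and $\mathbb{L}$ are exactly the objects occurring in $h(\mathbb{P}^{1}) = 1 \oplus \mathbb{L}$ and $1 = \mathbb{L}^{\otimes 0}$; thus $G_{\mathrm{And}}(\mathbb{L}) \cong \mathbb{G}_{m}$, which is connected, so $\Omega^{\mathrm{And}}_{\mathbb{L}}$ is connected. On the other hand $\overline{\mathbb{Q}}(P(\mathbb{L})) = \overline{\mathbb{Q}}(2\pi i) = \overline{\mathbb{Q}}(\pi)$ has transcendence degree $1$ over $\mathbb{Q}$ by the transcendence of $\pi$ (Lindemann), matching $\dim G_{\mathrm{And}}(\mathbb{L}) = 1$. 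Hence the MGPC holds for $\mathbb{L}$; this is nothing but the known case $M = h(\mathbb{P}^{n})$ recalled in the introduction.

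It then remains to check that motivated classes on the powers of $S$ are algebraic. Because $b_{1} = b_{3} = 0$ and $t_{2}(S) = 0$, the CK-decomposition gives $h(S) \cong 1 \oplus \mathbb{L}^{\oplus \rho(S)} \oplus \mathbb{L}^{\otimes 2}$, so $h(S^{n}) \cong h(S)^{\otimes n}$ is a direct sum of Tate twists $\mathbb{L}^{\otimes j}$. Consequently every Betti cohomology class of $S^{n}$ is a $\mathbb{Q}$-linear combination of classes of algebraic cycles (products of hyperplane sections and points), so the Hodge conjecture holds trivially for each $S^{n}$; since motivated classes are Hodge, they are algebraic. Together with Lemma~\ref{surface b1} this yields the GPC for $S$.

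For the ``in particular'' assertion I would argue as follows: $q = 0$ forces $b_{1} = 2q = 0$, hence $b_{3} = b_{1} = 0$ by Poincar\'e duality, so the Betti-number hypotheses hold; and $p_{g} = 0$ together with the BC gives $t_{2}(S) = 0$, so the first part applies. Enriques surfaces satisfy $p_{g} = q = 0$ and are not of general type, so the BC holds for them by \cite{BKL}, furnishing a concrete example. The only point requiring a little care is that the reduction of Lemma~\ref{surface b1} does apply verbatim once $t_{2}(S) = 0$; beyond that there is no genuine obstacle here, the sole transcendence input being the classical transcendence of $\pi$.
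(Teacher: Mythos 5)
Your proposal is correct and follows the paper's own route: the paper proves this proposition simply by invoking Lemma~\ref{surface b1}, which with $t_{2}(S)=0$ reduces everything to the motive $\mathbb{L}$, whose GPC is the known $h(\mathbb{P}^{n})$ case via the transcendence of $\pi$. You merely spell out the details (connectedness of $\Omega^{\mathrm{And}}_{\mathbb{L}}$, algebraicity of motivated classes on powers of $S$ since $h(S)$ is a sum of Tate twists, and the standard reductions $q=0\Rightarrow b_{1}=b_{3}=0$, BKL for the Enriques case) that the paper leaves implicit.
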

\begin{proof}
It follows from Lemma \ref{surface b1}.
\end{proof}

Let us recall the object of this paper.
Let $A$ be an abelian surface.
The Kummer surface associated with $A$ 
is the K3 surface obtained by the minimal resolution of the quotient surface $A/\langle -id_{A} \rangle$.
This is an easy, but instructive example of K3 surfaces.
For more details, see \cite{o}, \cite{ss}.
We will use the following fact for our aim.

\begin{prop} (\cite{KMP}) \label{kummer}
Let $\mathrm{Km}(A)$ be the Kummer surface associated with an abelian surface $A$.
Then $t_{2}(\mathrm{Km}(A)) \cong t_{2}(A)$. In particular, $t_{2}(\mathrm{Km}(A)) \neq 0$.
\end{prop}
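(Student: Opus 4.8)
The plan is to realise $\mathrm{Km}(A)$ as a finite quotient of a blow-up of $A$ and to track the transcendental motive through both steps. Write $\iota := -\mathrm{id}_{A}$; its fixed locus is the set $A[2]$ of the $16$ two-torsion points. Let $\beta \colon \widetilde{A} \to A$ be the blow-up of $A$ along $A[2]$. The involution $\iota$ lifts to an involution $\widetilde{\iota}$ of $\widetilde{A}$ whose fixed locus is now the divisor formed by the $16$ exceptional $(-2)$-curves, and by construction $\mathrm{Km}(A) = \widetilde{A}/\langle\widetilde{\iota}\rangle$, with $p \colon \widetilde{A} \to \mathrm{Km}(A)$ the associated double cover. (One could instead argue directly with the singular quotient $A/\langle\iota\rangle$ and use that $t_{2}$ is insensitive to its minimal resolution; I prefer the blow-up model since then all surfaces in sight are smooth projective.)

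First I would apply the blow-up formula for Chow motives to $\beta$: since the centre $A[2]$ is a finite set of points, $h(\widetilde{A}) \cong h(A) \oplus \mathbb{L}^{\oplus 16}$, the $16$ copies of $\mathbb{L}$ being the classes of the exceptional curves, which sit in degree $2$ and are compatible with the Chow–Künneth decompositions. Hence $h_{2}(\widetilde{A}) \cong h_{2}(A) \oplus \mathbb{L}^{\oplus 16}$, and since the extra summand is of Lefschetz type it is absorbed into the algebraic part, so $t_{2}(\widetilde{A}) \cong t_{2}(A)$.

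Next I would pass to the quotient using the formalism of motives of finite quotients: $h(\mathrm{Km}(A)) \cong h(\widetilde{A})^{\langle\widetilde{\iota}\rangle}$, cut out by the projector $\tfrac{1}{2}(\Delta_{\widetilde{A}} + \Gamma_{\widetilde{\iota}})$. The involution $\widetilde{\iota}$ acts trivially on the $16$ copies of $\mathbb{L}$, since the exceptional curves are fixed. On $h(A) = \bigoplus_{i=0}^{4}\wedge^{i}h_{1}(A)$ it acts as $\iota$, hence by $(-1)^{i}$ on $h_{i}(A) = \wedge^{i}h_{1}(A)$; in particular it is trivial on $h_{0}(A)$, $h_{2}(A)$, $h_{4}(A)$ and equal to $-1$ on $h_{1}(A)$, $h_{3}(A)$. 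Taking invariants therefore kills $h_{1}$ and $h_{3}$ and leaves $h_{0}, h_{2}, h_{4}$ together with the $16$ copies of $\mathbb{L}$, so
\[ h_{2}(\mathrm{Km}(A)) \cong h_{2}(A) \oplus \mathbb{L}^{\oplus 16}, \]
whence $t_{2}(\mathrm{Km}(A)) \cong t_{2}(A)$ after splitting off the Lefschetz parts (consistently with $\rho(\mathrm{Km}(A)) = \rho(A) + 16$). For the last assertion, since $A$ is an abelian surface we have $b_{2}(A) = 6$ and $\rho(A) \leq h^{1,1}(A) = 4$, so the Betti realization $H_{B}(t_{2}(A))$ has dimension $b_{2}(A) - \rho(A) \geq 2$; thus $t_{2}(A) \neq 0$, and therefore $t_{2}(\mathrm{Km}(A)) \neq 0$.

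The only genuine difficulty is bookkeeping: one must verify that the isomorphism $h(\widetilde{A}) \cong h(A) \oplus \mathbb{L}^{\oplus 16}$ and the averaging projector for $\widetilde{\iota}$ are compatible with the \emph{refined} Chow–Künneth projectors $\pi_{2}^{\mathrm{alg}} \oplus \pi_{2}^{\mathrm{tr}}$ that \cite{KMP} use to define $t_{2}$, i.e.\ with the orthogonal splitting of $h_{2}$ into algebraic and transcendental parts. This is precisely the functoriality of $t_{2}$ under correspondences established in \cite{KMP}; once it is invoked, the computation above gives the asserted isomorphism.
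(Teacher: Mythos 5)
Your argument is essentially correct, but note that the paper contains no proof of this proposition to compare against: it is quoted verbatim from \cite{KMP}, and the burden of proof is entirely delegated to that reference. What you have written is, in substance, the standard argument underlying that citation: pass to the blow-up $\widetilde{A}\to A$ at the sixteen $2$-torsion points, use the blow-up formula $h(\widetilde{A})\cong h(A)\oplus\mathbb{L}^{\oplus 16}$, identify $h(\mathrm{Km}(A))$ with the image of the averaging projector $\tfrac12(\Delta_{\widetilde{A}}+\Gamma_{\widetilde{\iota}})$ (legitimate with $\mathbb{Q}$-coefficients since $p_*p^*=2\,\mathrm{id}$), and use that $-\mathrm{id}_A$ acts by $(-1)^i$ on $h_i(A)=\wedge^i h_1(A)$ in the Deninger--Murre decomposition, so the invariant part of degree $2$ is $h_2(A)\oplus\mathbb{L}^{\oplus 16}$. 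The passage from $h_2(\mathrm{Km}(A))\cong h_2(A)\oplus\mathbb{L}^{\oplus 16}$ to $t_2(\mathrm{Km}(A))\cong t_2(A)$ is exactly the point you flag at the end, and it is covered by \cite{KMP}: $t_2$ is well defined up to isomorphism and admits no nonzero morphisms to or from $\mathbb{L}$, so Lefschetz summands split off unambiguously. Your nonvanishing argument, $\dim H_{B}(t_2(A))=b_2(A)-\rho(A)\geq 6-4=2$, is also correct. Two minor points: the exceptional curves on $\widetilde{A}$ are $(-1)$-curves, becoming $(-2)$-curves only on $\mathrm{Km}(A)$; and since your computation lives in Chow motives while the paper applies the proposition inside $\mathcal{M}_{\overline{\mathbb{Q}}}^{\mathrm{And}}$, it is worth the one-line remark that the isomorphism persists under the canonical tensor functor from Chow motives to Andr\'e motives.
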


\section{Proof of the main theorem and remarks}
Here, we give an elementary proof for the main theorem of this paper.\\

\indent \textbf{Proof of Theorem \ref{main}.}
Set $S = \mathrm{Km}(A)$. Since $S$ is a K3 surface, by Lemma \ref{surface b1},
it suffices to show that the GPC holds for $t_{2}(S) \oplus \mathbb{L}$.
By Proposition \ref{kummer}, we may assume that $S = A$.
Since isogenous abelian varieties have isomorphic motives (\cite[Thm.~3.1]{DenM}),
we may further assume that $A = E^{2}$.
By Proposition \ref{t2} (2), the GPC holds for $t_{2}(E^{2}) \oplus \mathbb{L}$.
Thus, the GPC holds for $h(S)$. This ends the proof.

\begin{rem}
The motive $h(\mathrm{Km}(A))$ belongs to the Tannakian category 
$\mathcal{M}_{\overline{\mathbb{Q}}, 1}^{\mathrm{And}, \otimes}$
(more precisely $\langle h(E) \rangle$ for a CM elliptic curve $E$), 
but does not belong to the thick abelian subcategory $\mathcal{M}_{\overline{\mathbb{Q}}, 1}^{\mathrm{And}}$.
Unfortunately, we are aware that we can not provide any example of a motive 
that does not belong to $\langle h(E) \rangle$ and satisfies the GPC and $t_{2} \neq 0$.
\end{rem}
Kreutz-Shen-Vial proved the GPC for K3 surfaces of Picard corank 0 
($\rho = h^{1, 1} = 20$) \cite[Cor.~6.17 (i)]{KSV}.
Their result generalizes Theorem \ref{main}.
The details are as follows.
Let $S$ and $E$ be as in Theorem \ref{main}. Then $\rho(S) = h^{1, 1}(S) = 20$ since $\rho(E^{2}) = 4$.
Recall that a K3 surface $X$ over $\mathbb{C}$ admits a Shioda-Inose structure if 
there is a Nikulin involution $i$ on $X$ such that the desingularization of the quotient surface $X/\langle i \rangle$
is the Kummer surface $\mathrm{Km}(A')$ associated to an abelian surface $A'$.
Pedrini showed:
\begin{thm} \label{SIt} (\cite{p})
Let $X$ be a K3 surface over $\mathbb{C}$ which admits a Shioda-Inose structure.
Then $t_{2}(X) \cong t_{2}(\mathrm{Km}(A')) \cong t_{2}(A')$.
\end{thm}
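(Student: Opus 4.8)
The plan is to leave the second isomorphism to Proposition~\ref{kummer}, which already gives $t_{2}(\mathrm{Km}(A'))\cong t_{2}(A')$, and to produce the first isomorphism $t_{2}(X)\cong t_{2}(\mathrm{Km}(A'))$ from the degree-two quotient underlying the Shioda--Inose structure, exactly along the lines of the Kummer case. Write $\iota$ for the Nikulin involution on $X$, set $Y:=\mathrm{Km}(A')$, the minimal resolution of $X/\langle\iota\rangle$, and let $\pi\colon X\dashrightarrow Y$ be the induced dominant rational map of degree $2$. Passing to the closure of the graph of $\pi$ (or, after blowing up the eight fixed points of $\iota$ on $X$, to an honest degree-two finite morphism) one obtains algebraic correspondences $\pi^{*}$ and $\pi_{*}$ between $h(X)$ and $h(Y)$, and the usual projection-formula computation gives $\pi^{*}\circ\pi_{*}=\mathrm{id}+\iota_{*}$ as an endomorphism of $h(X)$ and $\pi_{*}\circ\pi^{*}=2\cdot\mathrm{id}$ as an endomorphism of $h(Y)$. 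The exceptional curves produced by the resolutions lie in the Lefschetz summands $\mathbb{L}^{\oplus\bullet}$ of $h(X)$ and $h(Y)$, so after composing with the Chow--K\"unneth projectors onto the transcendental parts these two identities hold verbatim for $\pi^{*}\colon t_{2}(Y)\to t_{2}(X)$ and $\pi_{*}\colon t_{2}(X)\to t_{2}(Y)$.

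The heart of the matter is to show that $\iota$ acts as the identity on $t_{2}(X)$. On cohomology this is immediate: a Nikulin involution is symplectic, so $\iota^{*}$ fixes $H^{2,0}(X)=\mathbb{C}\,\omega_{X}$ and hence also its conjugate $H^{0,2}(X)$, whence the $(-1)$-eigenspace of $\iota^{*}$ in $H^{2}_{\mathrm{tr}}(X,\mathbb{Q})$ is a sub-Hodge structure of pure type $(1,1)$; by the Lefschetz $(1,1)$-theorem its rational classes lie in $\mathrm{NS}(X)_{\mathbb{Q}}$, hence in the orthogonal complement of $H^{2}_{\mathrm{tr}}(X,\mathbb{Q})$, so that eigenspace vanishes and $\iota^{*}=\mathrm{id}$ on the Betti realization of $t_{2}(X)$. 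To upgrade this to the equality of Chow correspondences $\iota_{*}=\mathrm{id}$ on $t_{2}(X)$, one notes that $\tfrac12(\mathrm{id}-\iota_{*})$ is then a homologically trivial idempotent, and such an idempotent vanishes once $t_{2}(X)$ is finite-dimensional in the sense of Kimura--O'Sullivan, a homologically trivial endomorphism of a finite-dimensional motive being nilpotent and a nilpotent idempotent being $0$.

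Granting $\iota_{*}=\mathrm{id}$ on $t_{2}(X)$, the two relations above read $\pi^{*}\circ\pi_{*}=2\cdot\mathrm{id}$ on $t_{2}(X)$ and $\pi_{*}\circ\pi^{*}=2\cdot\mathrm{id}$ on $t_{2}(Y)$, so $\pi^{*}$ and $\tfrac12\pi_{*}$ are mutually inverse isomorphisms $t_{2}(Y)\cong t_{2}(X)$; composing with Proposition~\ref{kummer} then gives $t_{2}(X)\cong t_{2}(\mathrm{Km}(A'))\cong t_{2}(A')$. The same template recovers Proposition~\ref{kummer} itself: take $X=A'$ and $\iota=-\mathrm{id}_{A'}$, which is symplectic because it acts by $-1$ on $H^{1,0}$ and hence by $+1$ on $H^{2,0}=\wedge^{2}H^{1,0}$, and use that the motive of an abelian surface is finite-dimensional.

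The single genuinely nontrivial input, and therefore the main obstacle, is the finite-dimensionality of $t_{2}(X)$: for an arbitrary K3 surface this is unknown, so one must extract it from the Shioda--Inose structure itself. In the classical case $\rho(X)=20$ it is available because such an $X$ has a motive that is a direct summand of the motive of an abelian variety; in general one has to read finite-dimensionality off the explicit double-cover geometry relating $X$, $\mathrm{Km}(A')$ and $A'$. Pinning down those correspondences, and checking once more that all the blow-ups in sight touch only the Lefschetz summands, is the technical core; everything else is the formal correspondence bookkeeping indicated above.
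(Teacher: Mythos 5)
Your strategy --- transporting $t_{2}$ along the degree-two quotient correspondences and reducing everything to the triviality of the Nikulin involution on the transcendental part --- is the natural one (and is close in spirit to what Pedrini does; note that the paper itself offers no argument beyond the citation of \cite[Thm.~2]{p}). The cohomological computation, the relations $\pi_{*}\pi^{*}=2\cdot\mathrm{id}$ and $\pi^{*}\pi_{*}=\mathrm{id}+\iota_{*}$, and the delegation of the second isomorphism to Proposition~\ref{kummer} are all fine. But as a proof of Theorem~\ref{SIt} your text stops exactly at the decisive point, and you say so yourself: you never show that the homologically trivial idempotent $\tfrac12(\mathrm{id}-\iota_{*})$ on $t_{2}(X)$ vanishes. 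At the level on which \cite[Thm.~2]{p} is stated (Chow motives, as in \cite{KMP}) this is a genuine theorem and not bookkeeping: it is equivalent to the Bloch-type statement that the symplectic involution acts as the identity on $\mathrm{CH}_{0}(X)$, and it cannot be extracted from the action on cohomology alone; your appeal to Kimura--O'Sullivan finite-dimensionality of $t_{2}(X)$ is circular, since for a K3 surface with a Shioda--Inose structure that finite-dimensionality is precisely what one is trying to prove. The known ways to close the gap are: (i) Voisin's theorem that symplectic involutions of K3 surfaces act trivially on $\mathrm{CH}_{0}$, which gives $\iota_{*}=\mathrm{id}$ on $t_{2}(X)$ directly, after which your two relations yield the isomorphism; or (ii) a dominant rational map of degree two in the opposite direction, $\mathrm{Km}(A')\dashrightarrow X$ (Shioda's ``Kummer sandwich'', available for the Inose pencils attached to products of elliptic curves), which exhibits $t_{2}(X)$ as a direct summand of $t_{2}(\mathrm{Km}(A'))\cong t_{2}(A')$, hence finite-dimensional, and only then does your nilpotence argument apply. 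Without one of these inputs the proof does not go through.

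That said, for the purpose the statement serves in this paper the isomorphism is only needed in the category of Andr\'e motives: there morphisms are motivated correspondences, which are cohomology classes, so a homologically trivial morphism is zero. In that setting your argument is already complete --- triviality of $\iota^{*}$ on the transcendental lattice together with $\pi_{*}\pi^{*}=2$ and $\pi^{*}\pi_{*}=\mathrm{id}+\iota^{*}$ forces $\pi^{*}\pi_{*}=2\cdot\mathrm{id}$ on $t_{2}(X)$ as an Andr\'e correspondence, with no finite-dimensionality needed. So the gap is real for the theorem as stated (the Chow-level result of \cite{p}), but dissolves if one is content with the Andr\'e-level isomorphism that the discussion around Theorem~\ref{main} actually uses.
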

\begin{proof}
This follows from the proof in \cite[Thm.~2]{p}.
\end{proof}
Let $X$ be a K3 surface of Picard rank $\rho(X) = 20$.
Then, by \cite[Cor.~6.4]{mor}, $X$ has a SI structure and, by \cite{Kat},
the abelian surface $A'$ associated to $X$ is isogenous to $E'^{2}$,
where $E'$ is a CM elliptic curve.
By \cite[Cor.~2.10 (i) \& 4.4 (i)]{mor}, $X$ is not necessarily isomorphic to the Kummer surface $\mathrm{Km}(A')$.
However, $h(X) \cong h(\mathrm{Km}(A'))$ by Theorem \ref{SIt}.
We could not prove the GPC for K3 surfaces with $\rho = 18, 19$.
\begin{rem}
Let $V$ be a hyper-K\"ahler variety over $\overline{\mathbb{Q}}$.
This means a variety over $\overline{\mathbb{Q}}$ whose base change to $\mathbb{C}$ is projective, 
irreducible holomorphic symplectic, and $b_{2}(V) > 3$. 
Kreutz-Shen-Vial proved that if $V$ has Picard corank 0 and is of known deformation type, then $V$ satisfies the MGPC \cite[Thm.~6.15 (i')]{KSV}.
We are interested in the GPC for HK varieties of dimension $\geq 3$,
but could not prove it.
\end{rem}

\textbf{Acknowledgement.}
I would like to thank Prof.~Nobuo Tsuzuki for suggesting the GPC and the book \cite{AndreG},
 and providing helpful comments during Friday seminars 2022. 
Also, I would like to thank Prof.~Charles Vial for his interest and fruitful discussions related to the GPC in Bielefeld in October 2022, and for pointing out mistakes in the first draft.
Finally, I would like to thank Prof.~Masaki Hanamura for his comments around 1-motives and the referees for their prompt responses and useful suggestions.

\end{document}